\newtheorem{theorem}{Theorem}
\newtheorem{lemma}{Lemma}
\newtheorem{corollary}{Corollary}
\newtheorem{proposition}{Proposition}
\newtheorem{definition}{Definition}
\newtheorem{remark}{Remark}
\theoremstyle{remark}
\def\R{\mathbb{R}}
\def\P{\mathbb{P}}
\def\E{\mathbb{E}}
\def\I{\mathcal{I}}
\def\K{\mathcal{K}}
\def\G{\mathcal{G}}
\def\F{\mathcal{F}}
\def\M{\mathcal{M}}
\def\W{\mathcal{W}}
\def\SS{\mathscr{S}}
\def\LL{\mathcal{L}}
\def\gr{\operatorname{gr}}
\renewcommand{\phi}{\varphi}
\renewcommand{\epsilon}{\varepsilon}
\definecolor{mygray}{gray}{0.9}
\definecolor{deeppink}{RGB}{255,20,147}
\definecolor{mygreen}{rgb}{0.05, 0.576, 0.03}
\definecolor{myred}{rgb}{0.768, 0.09, 0.09}
\long\def\symbolfootnote[#1]#2{\begingroup
\def\thefootnote{\fnsymbol{footnote}}\footnote[#1]{#2}\endgroup}
\newcommand{\Keywords}[1]{ \noindent {\footnotesize
             {\small \em Keywords and phrases.} {\sc #1} } }
\newcommand{\ams}[2]{  \noindent {\footnotesize
             {\small \em AMS {\rm 2000} subject classifications.
           {\rm Primary {\sc #1}; secondary {\sc #2}} } } }
\begin{document}

\title{Kingman's model with
 random mutation probabilities: \\convergence and condensation I}
\author{Linglong Yuan\footnote{University of Liverpool,
Department of Mathematical Sciences, Peach Street, L69 7ZL, Liverpool, UK. \\Email: yuanlinglongcn@gmail.com}\,\,\footnote{
Xi'an Jiaotong-Liverpool University,
Department of Mathematical Sciences, Ren'ai Road, 215111, Suzhou, P.R.C.\ China.}}

\affil[]{Department of Mathematical Sciences\\
University of Liverpool}
\affil[]{Department of Mathematical Sciences\\
Xi'an Jiaotong-Liverpool University}
\date{\today}

\maketitle
\tableofcontents

\begin{abstract}
For a one-locus haploid infinite population with discrete generations, the celebrated Kingman's model describes the evolution of fitness distributions  under the competition of selection and mutation, with a constant mutation probability. This paper generalises Kingman's model by using i.i.d.\ random mutation probabilities, to reflect the influence of a random environment. The weak convergence of fitness distributions to the globally stable equilibrium is proved. The condensation occurs when almost surely a positive proportion of the population travels to and condensates at the largest fitness value. The condensation may occur when selection is more favoured than mutation. A criterion for the occurrence of condensation is given. 
\end{abstract}

\Keywords{Population dynamics, Mutation-Selection balance,  House of Cards,  Fitness distribution, Size-biased distribution, Distributional equation.} 

\ams{60F05}{60G10, 92D15, 	92D25} 

\section{Motivation and background}

Various biological forces interact with each other and drive the evolution of population all together. 
One important competing pair consists of selection and mutation. It was as early as 1937 when Haldane \cite{H37} put forward the concept  of  mutation-selection balance. The mathematical foundation of this subject was established by 
Crow and Kimura \cite{CK70}, Ewens \cite{E79}, and Kingman \cite{K80}. For more details on this topic, we refer to B\"urger \cite{B98, B00}. 

A simple setting is to consider a one-locus haploid infinite population with discrete generations under selection and mutation. The locus is assumed to have infinitely many possible alleles which have continuous effects on a quantitative type. The continuum-of-alleles models were introduced by Crow and Kimura \cite{CK70} and Kimura \cite{K65} and are used frequently in quantitative genetics. 

Kingman \cite{K77} suggested to explain the tendency that most mutations are deleterious by the assumption of the independence of the gene before and after mutation. This feature was named ``House of Cards",  as the mutation destroys the biochemical house of cards built up by evolution, by Kingman in \cite{K78} where the most famous one-locus model was proposed.  In this model, a population is characterised by its type distribution, which is a probability measure on $[0,1]$ and any $x\in[0,1]$ is a type value. In Kingman's setting, an individual with a larger type value is fitter, which means more productive. So the type value can also be named fitness value.
Kingman's model can be seen as the limit of a finite population model, see \cite{G19}.

B\"urger \cite{B89} generalised the selection mechanism by allowing the gene after mutation to depend on that before and proved convergence in total variation. The genetic variation of the equilibrium distribution was computed and discussed.  I proposed \cite{Y15} a more general selection mechanism which can model general macroscopic epistasis, with the other settings the same as in Kingman's model. This model was applied to the modelling of the Lenski experiment (see \cite{GKWY} for a description of the experiment).

There exist also models on the balance of selection and mutation in the setting of continuous generations. B\"urger \cite{B86} provided an exact mathematical analysis of Kimura's continuum-of-alleles model, focusing on the equilibrium genetic variation. Steinsaltz {\it et al} \cite{SEW05} proposed a multi-loci model using a differential equation to study the ageing effect. Later on the recombination 
was incorporated to the model \cite{SEW13}. Betz {\it et al}'s model \cite{BDM18} generalised a continuous-time version of Kingman's model and other models arising from physics. 

However to the author's best knowledge, Kingman's model has never been generalised to a random version. 
In this paper we will assume that the mutation probabilities of all generations form an i.i.d.\ sequence. Biologically, we think of a stable random environment such that the mutation probabilities vary on time but  independently sampled from the same distribution. 

In Kingman's model, condensation occurs if a certain positive proportion of the population travels to and condensates at the largest fitness value. This is due to the dominance of selection over mutation. In the random model proposed in this paper, we also consider the convergence of (random) fitness distributions to the equilibrium and the condensation phenomenon. Moreover, Kingman's model has been revisited recently in terms of the travelling wave of mass to the largest fitness value \cite{DM13}. The random model provides another example for consideration in this direction.

\section{Models}

\subsection{Kingman's model with time-varying mutation probabilities}

Consider a haploid population of infinite size and discrete generations under the competition of selection and mutation. We use a sequence of probability measures $(P_n)=(P_n)_{n\geq 0}$ on $[0,1]$ to describe the distribution of fitness values in the $n$th generation. We can assume, more generally, that the probability measures are supported on a finite interval, not necessarily $[0,1]$. But since only fitness ratios will be  relevant (see \cite{K78} or \cite{Y15} for a more explicit explanation), we adopt the setting of $[0,1]$, which was used by Kingman \cite{K78}, and which is equivalent to general finite supports. 

Individuals in the $n$th generation are 
children of the $(n-1)$th generation. First of all, the fitness distribution of children is initially $P_{n-1}$ (an exact copy from parents). Then selection takes effect, such that the fitness distribution is updated from $P_{n-1}$ to the size-biased distribution
$$\frac{x P_{n-1}(dx)}{\int y P_{n-1}(dy)}.$$ 
Here we use $\int$ to denote $\int_0^1.$ Basically the new population is re-sampled from the existing population by using their fitness as a selective criterion. 
Next, each individual mutates independent with the same mutation probability which we denote by $b_{n}$ taking values in $[0,1).$ Each mutant has the fitness value sampled independently from a common mutant distribution, that we denote by $Q$, a probability measure on $[0,1]$. Then the resulting distribution is the distribution of the $n$th generation
\begin{equation}\label{pi} P_{n}(dx)=(1-b_{n})\frac{x P_{n-1}(dx)}{\int y P_{n-1}(dy)}+b_{n}Q(dx).\end{equation}
The fact that we exclude the case that $b_n$ equals $1$ is because in this situation  we have $P_{n}=Q$ which loses accumulated evolutionary changes. This is not interesting neither biologically nor mathematically.  

Expanding (\ref{pi}), we can also obtain
\begin{equation}\label{P'}P_{n}(dx)=\left(\prod_{l=0}^{n-1}\frac{1-b_{l+1}}{\int yP_l(dy)}\right)x^{n} P_0(dx)+\sum_{j=1}^{n}\left(\prod_{l=j}^{n-1}\frac{1-b_{l+1}}{\int yP_l(dy)}\right)b_jm_{n-j}Q^{n-j}(dx)\end{equation}
where 
$$Q^k(dx):=\frac{x^kQ(dx)}{\int y^kQ(dy)},\quad m_k:=\int x^kQ(dx),  \quad \forall k\geq 0.$$
In particular if $Q=\delta_0,$ the Dirac measure on $\{0\}$,  $Q^k=\delta_0$ for any $k\geq 0$.

When all the $b_n$'s are equal to the same number $b\in[0,1),$ this is the model introduced by Kingman \cite{K78}. In the general setting we allow the mutation probabilities to be different. We call it {\it Kingman's model with time-varying mutation probabilities} or {\it the general model} for short.  

We introduce a few more notations. Let $M$ be the space of (nonnegative) Borel measures on $[0,1]$ and $M_1$ the subspace of $M$ consisting of probability measures. Let $M, M_1$ be endowed with the topology of weak convergence. We use $\stackrel{d}{\longrightarrow}$ to denote weak convergence.  We say a sequence of measures $(u_n)$ converges in total variation to a measure $u$, denoted by $u_n\stackrel{TV}{\longrightarrow}u,$ if the total variation, $\sup_B|u_n(B)-u(B)|$ where the supremum is taken over all Borel sets, converges to $0$.

For any $u\in M_1$, define
\begin{equation}\label{u}S_{u}:=\sup\{x:u[x,1]>0\}.\end{equation}
So $S_{u}$ is interpreted as the {\it largest fitness value} in a population of distribution $u.$ Define $h:=S_{P_0}$. It is not difficult to see that $S_{P_{n}}=\max\{S_{P_0}, S_Q\}$ if the equality holds for $n-1$ or $0<b_n< 1$. Since we are interested in asymptotics, it is thus without loss of generality to assume that $h\geq S_Q$. Therefore $S_Q\leq h\leq 1.$

Note that the general model has parameters $(b_n)_{n\geq 1}, Q, P_0, h$.  Kingman's model shares the same parameters, but with $b_n$'s all equal to $b.$
 We call $(P_n)$ {\it the forward sequence} or just {\it the sequence}. Although $h$ is determined by $P_0$, we still consider $h$ as a parameter as it will be clear later that for Kingman's model and the random model considered in this paper, the limit of $(P_n)$ depends on $P_0$ only through $h$. This is the so-called global stability.  

\subsection{Convergence and condensation in Kingman's model}
Kingman \cite{K78} proved the convergence of $(P_n)$ when all mutation probabilities are equal, i.e., $b_n=b, \forall n\geq 1$.

\begin{theorem}[Kingman's Theorem, \cite{K78}]\label{King}
1.  If $\int \frac{Q(dx)}{1-x/h}\geq b^{-1},$ then $(P_n)$ converges in total variation  to 
$$\K(dx)=\frac{b \theta_bQ(dx)}{\theta_b-(1-b)x},$$
with $\theta_b$, as a function of $b$, being the unique solution of 
\begin{equation}\label{sb}\int\frac{b \theta_bQ(dx)}{\theta_b-(1-b)x}=1.\end{equation}
2. If $\int \frac{Q(dx)}{1-x/h}< b^{-1}$, then $(P_n)$ converges weakly to 
$$\K(dx)=\frac{b Q(dx)}{1-x/h}+\Big(1-\int\frac{b Q(dy)}{1-y/h}\Big)\delta_{h}(dx).$$ 
\end{theorem}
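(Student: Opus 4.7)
The proof breaks into three movements: identify the fixed points of the selection--mutation map, use weak compactness to show $(P_n)$ converges to the unique fixed point, and then exploit the explicit formula (\ref{P'}) to upgrade to total variation in Case 1 and pin down the atom in Case 2.

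First, I characterise the fixed points. Any weak limit $P^*$ of (\ref{pi}) with $b_n\equiv b$ must satisfy $(\theta-(1-b)x)P^*(dx)=b\theta\,Q(dx)$ with $\theta=\int yP^*(dy)$; on $[0,h)$ this forces the density form $\frac{b\theta Q(dx)}{\theta-(1-b)x}$, and an atom at $h$ is admissible only when $\theta=(1-b)h$. The function $f(\theta):=\int \frac{b\theta\,Q(dx)}{\theta-(1-b)x}$ is strictly decreasing on $[(1-b)h,\infty)$, running from $f((1-b)h)=\int\frac{bhQ(dx)}{h-x}$ down to $f(\infty)=b<1$. The hypothesis of Case 1 is precisely $f((1-b)h)\geq 1$, which makes $f(\theta)=1$ solvable at a unique $\theta_b\geq(1-b)h$ and produces the claimed $\K$ without an atom; its failure forces $\theta=(1-b)h$ and places the missing mass $1-f((1-b)h)$ at $h$. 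Uniqueness of the fixed point holds in both regimes.

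Second, I deduce convergence by compactness. Since $M_1$ is compact in the weak topology, $(P_n)$ has cluster points. Along any subsequence $P_{n_k}\to P^*$ weakly, $c_{n_k}:=\int yP_{n_k}(dy)\to c^*:=\int yP^*(dy)$, and passing to the limit in (\ref{pi})---using that $c_n\geq bm_1>0$ whenever $m_1>0$, while the degenerate case $Q=\delta_0$ is handled directly---exhibits $P^*$ as a fixed point. By the previous step $P^*=\K$, so $P_n\to\K$ weakly and $c_n\to c^*$, equal to $\theta_b$ in Case 1 and to $(1-b)h$ in Case 2.

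Third, I exploit (\ref{P'}). Using $m_kQ^k(dx)=x^kQ(dx)$ and setting $A_n:=\prod_{l=0}^{n-1}(1-b)/c_l$, that formula rewrites as
\begin{equation*}
P_n(dx)=A_n\,x^nP_0(dx)\;+\;b\sum_{j=1}^{n}\frac{A_n}{A_j}\,x^{n-j}\,Q(dx).
\end{equation*}
In Case 1 one has $\theta_b>(1-b)h$ (the boundary subcase merges continuously with Case 2), so $A_nh^n\to 0$ geometrically; the singular-in-$Q$ piece $A_nx^nP_0(dx)$ then vanishes in total variation, and dominated convergence on the $Q$-density $b\sum_j(A_n/A_j)x^{n-j}$ identifies its $L^1(Q)$ limit as the density of $\K$. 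In Case 2, instead, $A_nh^n=\prod_l(1-b)h/c_l$ stays bounded away from zero, and $A_nx^nP_0(dx)$ concentrates near $h$, supplying exactly the atom in $\K$.

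The main obstacle will be Case 2: to show the atom carries the claimed mass one needs quantitative control of $(1-b)h-c_n$---not merely $c_n\to(1-b)h$---so that the infinite product $\prod_l(1-b)h/c_l$ settles at the correct positive value rather than drifting to $0$ or $\infty$. This rate estimate on the dynamics of $c_n$ is the delicate step: soft compactness yields weak convergence, but determining the precise weight at $h$ requires a separate, more careful analysis.
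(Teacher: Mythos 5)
The paper states this result with a citation to Kingman \cite{K78} and gives no proof of its own, so there is no in-paper argument to compare against; the machinery the paper builds for the random model (monotone backward sequences, Lemma~\ref{limitgf} and its consequences) would supply a different and complete proof of which the deterministic case is a special instance. Your proposal, however, has a genuine gap at its logical core. The assertion that ``uniqueness of the fixed point holds in both regimes'' is false in Case~2: the fixed-point relation $(\theta-(1-b)x)P^*(dx)=b\theta Q(dx)$ admits a solution with an atom at \emph{any} $h'\in[S_Q,h]$ for which $\int\frac{Q(dx)}{1-x/h'}<b^{-1}$ (set $\theta=(1-b)h'$; the normalisation then balances automatically), and it also admits the no-atom solution with $\theta=\theta_b$ whenever $\int\frac{Q(dx)}{1-x/S_Q}\geq b^{-1}$. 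Under the Case~2 hypothesis several of these coexist, so the compactness step only locates subsequential limits inside this family and cannot conclude $P^*=\K$; nothing in the argument forces the atom to sit at $h$ rather than at some $h'<h$, or rules out the no-atom fixed point. Relatedly, passing to the limit in \eqref{pi} along a subsequence shows only that the omega-limit set is invariant under the map, not that every cluster point is a fixed point; without a monotonicity or Lyapunov structure this step is itself incomplete.

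You do flag the determination of the atom mass in Case~2 as ``the delicate step,'' but that delicacy is precisely the content of the theorem, and the sketch supplies no control of $(1-b)h-c_n$. There is also an unaddressed boundary subcase inside Case~1, namely $\int\frac{Q(dx)}{1-x/h}=b^{-1}$, where $\theta_b=(1-b)h$: here $A_nh^n$ has no geometric decay, the singular piece $A_nx^nP_0(dx)$ need not vanish in total variation, and ``merges continuously with Case~2'' would at best give weak rather than the asserted TV convergence. The natural way to close all of these gaps is the monotonicity exploited elsewhere in the paper: iterating the map from $\delta_h$ produces a sequence that is monotone in the order $\leq_{h-}$ (equivalently, decreasing in $\preceq$, cf.\ Remark~\ref{meascomp}), which identifies the limit uniquely, determines the atom mass via \eqref{k0pn}--\eqref{k0=}, and is extended to general $P_0$ by a squeezing argument as in Section~\ref{proofmain}.
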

Note that $\K$ is uniquely determined by $b, Q, h$, but not the choice of $P_0$. In this sense $\K$ is a globally stable equilibrium. For simplicity, for any measure, say $\mu,$ its mass on a point $x$ is denoted by $\mu(x)$ instead of $\mu(\{x\}).$ Then we say {\it there is  condensation at $h$ in Kingman's model} if $Q(h)=0$ but $\K(h)>0$. We call $\K(h)$ {\it the condensate size} if $Q(h)=0$.
In the case 1 above, there is no condensation. 
The condition $\int \frac{Q(dx)}{1-x/h}\geq b^{-1}$ is satisfied only if $b$ is big and/or $Q$ is fit (i.e., having more mass on larger values). It means mutation is stronger against selection, so that the limit does not depend on $P_0$ at all.

In the case 2, the condition $\int \frac{Q(dx)}{1-x/h}< b^{-1}$ implies $Q(h)=0$, but we have that $\K(h)>0$.  So there is condensation. Contrarily to the first case, selection is more favoured so that the limit depends on $P_0$ through $h$. If $P_0(h)=0$ (implying $S_{P_n}=h$ and $P_n(h)=0$ for any $n$), a certain amount of mass $\K(h)=\Big(1-\int\frac{b Q(dy)}{1-y/h}\Big)$ travels to the largest fitness value $h$, by the force of selection. 

Next we introduce the random model, which is the main object of study in this paper. 
\subsection{Kingman's model with random mutation probabilities}

Let $(\beta_n)_{n\geq 0}$ be an i.i.d.\ sequence of random variables in the common probability space $(\Omega, \F, \P)$, taking values in $[0,1)$ with common distribution $\LL\in M_1$ supported on $[0,1)$. 
{The {\it Kingman's model with random mutation probabilities} or simply {\it the random model} is defined by the following dynamical system: 
\begin{equation}\label{ipi} P_{n}(dx)=(1-\beta_{n})\frac{x P_{n-1}(dx)}{\int y P_{n-1}(dy)}+\beta_{n}Q(dx), \quad n\geq1.\end{equation}
The random model has parameters $(\beta_n), Q, P_0, h$. It is a randomisation of Kingman's model, as we can set $\beta_n$'s to equal $b$ with probability 1. 

We are interested in the convergence of $(P_n)$ to the equilibrium and the phenomenon of condensation. 
Since we are dealing with random probability measures, i.e., random elements of $M_1,$  let us recall the definition of weak convergence in this context. Random (probability) measures $(\mu_n)$ supported on $[0,1]$ converge weakly to a limit $\mu$ if and only if for any continuous function $f$ on $[0,1]$ we have 
$$\int f(x)\mu_n(dx)\stackrel{d}{\longrightarrow}\int f(x)\mu(dx).$$
We refer to \cite{K17} for a reference on random measures. The definition of weak convergence for random measures stated in the follow-up paper \cite{Y20} is incorrect. But it does not affect anything there as the weak convergence results are all proved in this paper. 

As the sequence $(P_n)$ is completely determined by $(\beta_n), Q, P_0$ and $h$, the only randomness arises from $(\beta_n)$. In comparison to the terminology in statistical physics, the weak limit of $(P_n)$ is an {\it annealed} limit, which is obtained given the law of $(\beta_n)$. A {\it quenched} limit, which is obtained by conditioning on $(\beta_n)$, does not exist unless $P_0=Q=\delta_0$. A simple reason for nonexistence is that $P_n$ contains $\beta_nQ$ which fluctuates persistently as $(\beta_n)$ is i.i.d..  However in Section \ref{fbs} we will see that it is possible to obtain a quenched limit if the 
evolution is seen backwards.

For the particular case that $Q=\delta_0$, we have
$$P_n(dx)=(1-\beta_{n})\frac{x^nP_0(dx)}{\int y^nP_0(dy)}+\beta_{n}\delta_0(dx).$$
From this, it is easily deduced that the sequence $(P_n)$ converges weakly to the random element $(1-\beta)\delta_h+\beta\delta_0$, where $\beta$ is a random variable with law $\LL$, the common law of the $\beta_n$'s. So we assume from now on
$Q\neq\delta_0.$


\section{Main results}

\subsection{Weak convergence}
Recall that the sequence $(P_n)$ in the random model has parameters $(\beta_n), Q, P_0$ and $h$, with $h=S_{P_0}.$ Then $(P_n)$ converges weakly to a globally stable equilibrium, in the sense that the limit depends on $P_0$ only through $h$. Recall $\beta$ is a random variable with law $\LL$, the common law of $\beta_n$'s.



\begin{theorem}\label{main}
For the random model \eqref{ipi}, the sequence 
$(P_n)$ converges weakly to a random probability measure, denoted by $\I$, whose distribution depends on $\LL, Q, h$ but not on the choice of $P_0$.
\end{theorem}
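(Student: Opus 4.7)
The plan is to use a time-reversal argument that converts weak distributional convergence of the forward iteration into almost-sure weak convergence of a reversed sequence. Set $F_b(\mu)(dx) := (1-b)\frac{x\,\mu(dx)}{\int y\,\mu(dy)} + b\,Q(dx)$, so that $P_n = F_{\beta_n}(P_{n-1})$. Since $(\beta_1,\ldots,\beta_n)$ is i.i.d., for each $n$ we have
\[
P_n \stackrel{d}{=} \tilde P_n := F_{\beta_1} \circ F_{\beta_2} \circ \cdots \circ F_{\beta_n}(P_0).
\]
The crucial advantage of $\tilde P_n$ is that $\tilde P_{n+1} = G_n(F_{\beta_{n+1}}(P_0))$ where $G_n := F_{\beta_1} \circ \cdots \circ F_{\beta_n}$, so it differs from $\tilde P_n = G_n(P_0)$ only through an inner perturbation of the initial datum. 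Moreover $F_{\beta_{n+1}}(P_0)$ still has largest fitness value $h$. If I can show that $G_n$ loses memory of its input (among measures with largest value $h$) in the weak topology as $n\to\infty$, then $(\tilde P_n)$ is a.s.\ weakly Cauchy in $M_1$, hence converges a.s.\ to some random $\I$, and the desired $P_n \stackrel{d}{\to} \I$ will follow.

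To establish this synchronization, I would apply the expansion (\ref{P'}) to $G_n(\mu)$ for two starting measures $\mu, \mu'$ with $S_\mu = S_{\mu'} = h$, coupled through the same realization of $(\beta_n)$. All $\mu$-dependence is concentrated in the leading term $A_n x^n \mu(dx)$ and in the random normalizers $\int y\,\mu_l(dy)$ entering every coefficient, where $\mu_l$ denotes the intermediate measure produced at step $l$ of the composition. The leading term has its support squeezed toward $h$ by the factor $x^n$, so in the weak topology it can only contribute a possible atom at $h$; matching its asymptotic mass to that of the $\mu'$-version reduces the problem to showing that the ratios $\int y\,\mu_l(dy)/\int y\,\mu'_l(dy)$ tend to $1$ almost surely.

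Given synchronization, the residual sum $\sum_{j=1}^n B_{n,j}(\mu)\, Q^{n-j}(dx)$ in (\ref{P'}) can be shown to converge a.s.\ by a telescoping argument on the reversed iteration, while the leading term contributes at most a limiting atom at $h$. The combined limit $\I$ is then a random probability measure on $[0,1]$ whose distribution depends on $\LL, Q$ and $h$ alone, which is precisely the content of the theorem.

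The main obstacle is the self-referential character of the coefficients in (\ref{P'}): the normalizers $\int y\,\mu_l(dy)$ are themselves outputs of the same random dynamical system, so proving that their joint asymptotics are insensitive to the initial datum (beyond $h$) is essentially the problem we started with. A plausible route is to first establish convergence for the convenient reference choice $\mu = \delta_h$, where the leading term is explicit at every step, by direct analysis of (\ref{P'}) combined with ergodic-type control of the product factors $A_n$ over the i.i.d.\ driver $(\beta_n)$; one can then extend to general $P_0$ with $S_{P_0} = h$ via the coupling argument above, after verifying that $\int y\,\tilde P_l(dy)$ stabilises a.s.\ to a limit independent of the starting measure.
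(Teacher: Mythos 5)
Your high-level strategy---reverse time using the i.i.d.\ exchangeability, prove convergence for the reference datum $\delta_h$, then try to synchronize general $P_0$ to the $\delta_h$-sequence---is exactly the skeleton of the paper's argument, and the equality $P_n \stackrel{d}{=}\tilde P_n$ (the paper's \eqref{for=back}) is the right starting point. You also correctly locate the difficulty: the normalizers $\int y\,\mu_l(dy)$ are self-referential. But the proposal as written has two genuine gaps.

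First, you do not actually prove convergence for $\mu=\delta_h$. You appeal to ``ergodic-type control of the product factors,'' but this is not how it works: ergodicity of $(\beta_n)$ controls averages, not a.s.\ convergence of the measures. The mechanism the paper uses is an explicit monotonicity: for the backward sequence with $P_n^n=\delta_h$, one shows by induction that $P_j^n\leq_{h-}P_j^{n+1}$, i.e.\ the backward iterates are \emph{monotone increasing} on $[0,h)$ as $n$ grows (Lemma~\ref{limitgf}, \eqref{preclong}). This monotonicity, not ergodicity, yields pointwise total-variation convergence to $\G_j$ and is the engine of the whole proof. It works precisely because $\delta_h$ is extremal: $\delta_h\leq_{h-}\mu$ for every $\mu$ on $[0,h]$. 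Without this observation your ``direct analysis of \eqref{P'}'' has no lever.

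Second, and more seriously, the key synchronization claim---that $\int y\,\tilde P_l(dy)$ ``stabilises a.s.\ to a limit independent of the starting measure''---is stronger than what is true or what the paper proves, and you acknowledge yourself that it ``is essentially the problem we started with.'' The monotonicity argument only sandwiches a general $\mu$-started backward sequence from \emph{below} by the $\delta_h$-sequence; there is no upper envelope, so a.s.\ convergence of $P_j^{n,\mu}$ for general $\mu$ is not available. The paper does not prove a.s.\ synchronization at all: after the $\delta_h$ case it reverts to the forward sequences and argues distributionally, splitting into three further cases according to whether $I_{0,h}=0$ or $>0$ and whether $P_0(h)>0$ or $=0$, using the decomposition $P_n=\M_n+\W_n$ with one-sided comparisons in the non-condensation case, a quantitative lemma (Lemma~\ref{ca}) counting how often the distribution functions can differ by $\epsilon$ when $P_0(h)>0$, and a three-sequence sandwich with $h''\uparrow h$ (together with the condensation criterion and continuity of $\I_{0,h''}$ in $h''$) when $P_0(h)=0$. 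This case analysis is not cosmetic; it is where the content of the extension step lives, and your sketch replaces it with an unproved a.s.\ claim. So the plan is pointed in the right direction but is missing the monotonicity lemma that makes the $\delta_h$ case work, and the entire machinery (condensation dichotomy, Lemma~\ref{ca}, tripling) needed to get from $\delta_h$ to arbitrary $P_0$.
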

\begin{remark}
In (\cite{Y20}, P.872),  it is written that the distribution of $\I$ depends on $\beta, Q, h$. The statement is true in the sense that the distribution of $\I$ depends on $\beta$ via its distribution. Here we make it more clear by replacing $\beta$ by $\LL$. 
\end{remark}
\begin{remark}\label{0sq}
If we start with $P_0=\delta_h$ (recall that $h\in[S_Q,1]$), then all $P_n$'s are supported on $[0,S_Q]\cup\{h\}$, which entails that the limit $\I$ is supported on the same set $[0,S_Q]\cup\{h\}$. 
Moreover we have either $\I(h)>0$ a.s. or $\I(h)=0$ a.s. (a justification is provided in Remark \ref{eitheror} in Section \ref{lfbs}).
In the latter case, $\I$ is supported only on $[0,S_Q]$ and (the distribution of) $\I$ does not depend on $h$ (see Theorem \ref{critical}). 
Therefore, although we say $h$ is a parameter of $\I$ but it should be understood in the sense that $\I$ is weak limit of $(P_n)$ with $h=S_{P_0}.$
\end{remark}
The limit $\I$ is introduced in Section \ref{fbs}. But the proof of weak convergence is deferred to a later stage, as it uses other main results such as the condensation criterion for the random model. 

\subsection{Condensation criterion}
 The fact that either $\I(h)>0$ a.s. or $\I(h)=0$ a.s. allows us to give the precise definition of condensation in line with that for Kingman's model, as follows:
\begin{definition}
For the random model, we say there is condensation at the largest fitness value $h$ if $Q$ assigns zero mass at $h$ (i.e., $Q(h)=0$) but the limiting measure $\I$ assigns positive mass at it (i.e., $\I(h)>0$, a.s.). 
\end{definition}

\noindent Next we give the condensation criterion. If $h=S_Q$, we write $\I_Q$ for $\I$ and $\K_Q$ for $\K$.

\begin{theorem}[Condensation criterion]\label{critical}
If there is no condensation at $h$, then $\I\stackrel{d}{=}\I_Q$. The condensation criterion for $\I$ at $h$ is as follows: 
\begin{enumerate}
\item If $h=S_Q$, then there is no condensation at $h$ if 
\begin{equation}\label{sq}\E\left[\ln \frac{S_Q(1-\beta)}{\int y\I_Q(dy)}\right]<0.\end{equation}
\item If $h>S_Q$, then there is no condensation at $h$ if and only if 
\begin{equation}\label{h}\E\left[\ln \frac{h(1-\beta)}{\int y\I_Q(dy)}\right]\leq 0.\end{equation}
\end{enumerate}
Here $\E\left[\ln \frac{1-\beta}{\int y\I_Q(dy)}\right]$ is well defined, and takes values in $[-\infty,-\ln\int yQ(dy)]$, and depends only on the marginal distributions of $\beta$ and $\I_Q.$ 
\end{theorem}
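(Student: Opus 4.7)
My strategy is to convert the atom $P_n(h)$ into the exponential of a random-walk-like additive functional of $(\beta_l,P_{l-1})$, identify its Lyapunov exponent through an ergodic argument, and read off condensation from the sign. By Remark~\ref{0sq} I take $P_0=\delta_h$, so every $P_n$ is supported on $[0,S_Q]\cup\{h\}$ and $\I(h)$ is either a.s.\ zero or a.s.\ positive; only $Q(h)=0$ is relevant and is assumed throughout. Then (\ref{ipi}) restricted to $\{h\}$ iterates to the product formula
$$P_n(h)=\prod_{l=1}^{n}\frac{h(1-\beta_l)}{\int y\,P_{l-1}(dy)},\qquad \ln P_n(h)=\sum_{l=1}^{n} F(\beta_l,P_{l-1}),$$
with $F(\beta,u):=\ln h+\ln(1-\beta)-\ln\!\int y\,u(dy)$.

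\textbf{Part 1 (no condensation implies $\I\stackrel{d}{=}\I_Q$).} Assume $\I(h)=0$ a.s. Let $P_n^\#$ denote the normalised restriction of $P_n$ to $[0,S_Q]$. A direct check from (\ref{ipi}) shows that $P_n^\#$ satisfies the same recursion as the random model with $h$ replaced by $S_Q$, except that the normalising denominator is $\int y\,P_{n-1}(dy)$ in place of $\int y\,P_{n-1}^\#(dy)$; the multiplicative error is governed by $P_n(h)$ and tends to zero in probability. Combined with the uniqueness of the weakly stable equilibrium in the case $h=S_Q$ (Theorem~\ref{main}), this forces $\I\stackrel{d}{=}\I_Q$.

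\textbf{Part 2 (criterion and bounds).} Under no condensation, Part~1 yields $P_{l-1}\Rightarrow \I_Q$, and an ergodic-theorem argument for the additive functional $\sum F(\beta_l,P_{l-1})$--exploiting the independence of $\beta_l$ from the past and the convergence of the marginal law of $P_{l-1}$ to that of $\I_Q$--delivers
$$\frac{1}{n}\ln P_n(h)\ \longrightarrow\ \gamma:=\E\!\left[\ln\frac{h(1-\beta)}{\int y\,\I_Q(dy)}\right].$$
Since $P_n(h)\le 1$, consistency demands $\gamma\le 0$; hence $\gamma>0$ forces $\I(h)>0$, i.e.\ condensation. Conversely, $\gamma<0$ sends $P_n(h)\to 0$ exponentially, producing no condensation. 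For $h>S_Q$ the borderline $\gamma=0$ still gives $P_n(h)\to 0$ in probability, because the zero-drift walk constrained to $(-\infty,0]$ via $\ln P_n(h)\le 0$ is recurrent and cannot remain bounded away from $-\infty$; this closes the \emph{iff} of (\ref{h}). For $h=S_Q$ only sufficiency is claimed, since at $\gamma=0$ mass can accumulate near $S_Q$ without producing an atom, so only strict negativity is genuinely certified by the argument. The range $\gamma\in[-\infty,-\ln\int y\,Q(dy)]$ follows from the a.s.\ inequality $\int y\,\I_Q(dy)\ge \int y\,Q(dy)$, proved inductively along $(P_n)$ using the Cauchy--Schwarz bound $\int y^2\,u(dy)/\int y\,u(dy)\ge \int y\,u(dy)$, combined with $\ln(1-\beta)\le 0$. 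The main obstacle I anticipate is the rigorous ergodic step for the non-stationary functional $\sum F(\beta_l,P_{l-1})$: the Markov kernel carrying $P_{l-1}$ to $P_l$ lives on the infinite-dimensional simplex $M_1$ and $\ln(1-\beta_l)$ is potentially unbounded below, so truncation / uniform integrability together with a coupling to a stationary version with invariant distribution that of $\I_Q$ will be needed before ergodicity can be invoked cleanly.
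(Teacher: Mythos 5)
Your proposal heads in a genuinely different direction from the paper: you work with the forward chain $(P_n)$, whereas the paper converts everything to the \emph{backward} sequence $(\I_j)$. This distinction is not cosmetic, and it is where your argument breaks down.

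The central step in your Part 2 is the claim $\tfrac{1}{n}\ln P_n(h)\to\gamma$. You note yourself that this is your main obstacle, and it is: the forward chain $(\beta_l,P_{l-1})$ is a Markov chain started out of equilibrium, and convergence of the one-dimensional marginals of $P_{l-1}$ to the law of $\I_Q$ does not give a law of large numbers for the path functional $\sum_l F(\beta_l,P_{l-1})$. There is no ergodic theorem to invoke here without first establishing some form of positive Harris recurrence or an explicit coupling to a stationary version on the infinite-dimensional state space $M_1$, which you do not construct. The paper sidesteps this entirely: by the distributional identity (\ref{for=back}), statements about $(P_n)$ transfer to the finite backward sequences, whose limit $(\I_j)=(\G_0(\beta_{j+1},\beta_{j+2},\dots))$ is a measurable image of the shifts of the i.i.d.\ sequence $(\beta_j)$ and is therefore \emph{genuinely stationary ergodic} (Corollary \ref{ier} via Lemma \ref{ka}). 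Birkhoff's theorem then applies verbatim to $\tfrac{1}{n}\sum_{l=1}^n\ln\frac{h(1-\beta_l)}{\int y\I_l(dy)}$, which is exactly the content of Corollary \ref{pc}--3). Your plan recreates the desired quantity but cannot certify the limit.

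There is a second, independent gap: the borderline $\gamma=0$ with $h>S_Q$. Your ``zero-drift walk constrained to $(-\infty,0]$ is recurrent'' picture is a heuristic, not a proof — the increments $F(\beta_l,P_{l-1})$ are neither i.i.d.\ nor independent of the past through $P_{l-1}$, and the walk is not actually reflected or killed at $0$. The paper's resolution is the strict comparison in Corollary \ref{kk'}: if there is condensation at $h>S_Q$, then $\G_{0,h}\leq_{S_Q-}\G_{0,S_Q}$ with \emph{strict} inequality somewhere, so $\int y\I_{0,S_Q}(dy)<\int y\I_{0,h}(dy)$ a.s.\ strictly, and combining with $\E\bigl[\ln\frac{h(1-\beta)}{\int y\I_{0,h}(dy)}\bigr]=0$ (Corollary \ref{pc}--3)) forces $\gamma>0$ strictly. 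Without a monotone comparison of $\I_{0,h}$ against $\I_{0,S_Q}$ (or against $\I_{0,h''}$ for $h''\uparrow h$) there is no way to exclude condensation at the critical point, and your argument does not supply one. Relatedly, your converse direction ``$\gamma<0\Rightarrow$ no condensation'' also leans on the unjustified ergodic LLN, and on the existence and support of the weak limit $\I$ — i.e.\ on Theorem \ref{main} — which in the paper is proved \emph{after} Theorem \ref{critical} precisely because Case 4 of that proof uses the condensation criterion. Your use of Theorem \ref{main} can be restricted to $h=S_Q$ (essentially Corollary \ref{uq}), which avoids formal circularity, but you should state this explicitly and the $h>S_Q$ portion must not quietly invoke the general convergence statement.

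The range computation $\gamma\in[-\infty,-\ln\int yQ(dy)]$ via Cauchy--Schwarz is fine and matches Lemma \ref{enu} / Corollary \ref{pc}--1) in spirit. The decomposition of $P_n(h)$ as a telescoping product is correct and is also implicit in the paper (equation (\ref{devback})). But the two load-bearing steps — the ergodic identification of $\gamma$ as the a.s.\ growth rate of the atom, and the strictness of the comparison that rules out condensation at $\gamma=0$ when $h>S_Q$ — are exactly where the paper's backward-sequence machinery does real work, and they are missing from the proposal.
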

\begin{remark}\label{i=iq}
In fact, if there is no condensation at $h$, then $\I,\I_Q$ are the same random probability measure, based on the definition of $\I$ introduced at the end of Section \ref{fbs}. But since here we do not have the definition yet, we write a weaker version $\I\stackrel{d}{=}\I_Q.$ 
\end{remark}
By Remark \ref{e<=0} in Section \ref{lfbs}, we can only have 
$\E\left[\ln \frac{S_Q(1-\beta)}{\int y\I_Q(dy)}\right]\leq0$. About the occurrence of condensation in the case where $h=S_Q$, the fact that we cannot say anything when $\E\left[\ln \frac{S_Q(1-\beta)}{\int y\I_Q(dy)}\right]=0$ can be better understood in Kingman's model, which is a special random model.  In this model, $\E\left[\ln \frac{S_Q(1-\beta)}{\int y\I_Q(dy)}\right]=0$ becomes $$\ln \frac{S_Q(1-b)}{\int y\K_Q(dy)}=0.$$
By some simple computations using Theorem \ref{King}, the above display is equivalent to 
$$\int\frac{Q(dx)}{1-x/S_Q}\leq b^{-1}.$$ But it covers cases with and without condensation. 
For full details please see Appendix \ref{=<0}, where the case $h>S_Q$ is also analysed. 

We give some intuition why Theorem \ref{critical} holds true.  Consider the unnormalised variant of the dynamical system that is given by 
\begin{equation}\label{*}\overline{P}_n(dx)=(1-\beta_n)x\overline{P}_{n-1}(dx)+\beta_n\left(\int y\overline{P}_{n-1}(dy)\right)Q(dx)\end{equation}
with $\overline{P}_0=P_0.$ By induction, it can be shown  that 
\begin{equation}\label{**}\overline{P}_n=P_n\prod_{i=0}^{n-1}\int xP_i(dx), \forall n\geq 0.\end{equation}
We can roughly think of the growth of $\overline{P}_n$ as contributed by two parts, the initial $P_0$ and the subsequently arriving $Q$'s. If the initial distribution is supported on $[0,S_Q]$,  by Theorem \ref{main}, $P_i$ converges weakly to $\I_Q$ as $i\to\infty.$ Then the part of $\overline{P}_n$ contributed by the $Q$'s grows at rate $\gr(Q):=\E[\ln\int x\I_Q]$ (see \eqref{**}). In comparison the largest fitness value $h$ in $P_0$ can be assigned the growth rate $\gr(h):=\E[\ln h(1-\beta)]$ (due to the term $(1-\beta_n)x\overline{P}_{n-1}(dx)$ in \eqref{*}). 
Then it is clear that the occurrence of condensation is determined by the comparison of $\gr(h)$ and $\gr(Q)$. However it is subtle when $\gr(h)=\gr(Q)$: no condensation if $h>S_Q$ and it is undetermined if $h=S_Q$.

In the follow-up paper \cite{Y20}, we provide a matrix representation for $\I_Q$, so the condensation criterion can be written neatly (Corollary 2, P.877). Moreover, using matrix analysis, we can compare the fitness of equilibria from different models (Section 3.3-(3), P.878--879).  The challenging problem of finding a necessary and sufficient condition for the occurrence of condensation in the case $h=S_Q$ has not been dealt with anywhere and still remains open.   

\subsection{Invariant measure}
We introduce the notion of {\it invariant measure}, which includes the limit $\I.$ We will heavily use the invariant measures in the proofs.
\begin{definition}[Invariant measure]
A random probability measure $\nu$ is invariant if it is supported on $[0,1]$ and satisfies\ 
\begin{equation}\label{invnu}\nu(dx)\stackrel{d}{=}(1-\beta)\frac{x\nu(dx)}{\int y\nu(dy)}+\beta Q(dx)\end{equation} 
where $\beta \text{ is independent of } \nu$.
\end{definition}
Clearly $\I$ is an invariant measure, since it is the weak limit of $(P_n)$ defined by \eqref{ipi}.

\begin{theorem}[Compoundness of invariant measures]\label{com}
For any invariant measure $\nu$, there exists a regular conditional distribution of $\nu$ on $S_\nu$. Moreover, conditional on $S_\nu$,  
$$(\nu|S_{\nu})\stackrel{d}{=}\I,\quad \text{ almost surely},$$
where $\I$ is the random probability measure introduced in Theorem \ref{main} with parameters  $\LL, Q, h=S_\nu$ and satisfies $\P(S_\I=S_\nu|S_\nu)=1$, a.s..
\end{theorem}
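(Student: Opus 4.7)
The strategy is to iterate the invariance equation \eqref{invnu} and then invoke Theorem \ref{main} to identify the conditional law of $\nu$ given $S_\nu$. Since $M_1$ equipped with weak convergence is a Polish space and the map $S : M_1 \to [0,1]$ is Borel measurable, a regular conditional distribution of $\nu$ given $S_\nu$ exists as a Markov kernel $h \mapsto \mathcal{L}(\nu \mid S_\nu = h)$. Invariance \eqref{invnu} also forces $S_\nu \geq S_Q$ almost surely, since the right-hand side always contains the component $\beta Q$ and $\beta<1$ a.s.; hence we may restrict attention to $h \in [S_Q, 1]$.

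The core step is the iteration. Let $(\beta_n)_{n \geq 1}$ be i.i.d.\ with common law $\LL$, independent of $\nu$, and define $\nu_0 := \nu$ together with
\[
\nu_n(dx) := (1-\beta_n)\,\frac{x\,\nu_{n-1}(dx)}{\int y\,\nu_{n-1}(dy)} + \beta_n Q(dx), \qquad n \geq 1.
\]
Applied inductively, the invariance equation yields $\nu_n \stackrel{d}{=} \nu$ for every $n \geq 0$. Moreover, since each $\beta_n \in [0,1)$ and $S_{\nu_0} \geq S_Q$, the same computation used in Section 2.1 for $S_{P_n}$ gives $S_{\nu_n} = S_{\nu_0} = S_\nu$ almost surely for all $n$. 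Now fix $h$ in the support of $S_\nu$ and condition on $\{S_\nu = h\}$: the sequence $(\nu_n)$ is precisely the random model \eqref{ipi} driven by i.i.d.\ mutation probabilities with law $\LL$, independent of the (random) initial distribution $\nu_0$, whose largest fitness value is $h$. By Theorem \ref{main}, which is global in $P_0$ in the sense that the weak limit depends on $P_0$ only through $h$, $\nu_n$ converges weakly to the random probability measure $\I$ with parameters $\LL, Q, h$. Since $\nu_n \stackrel{d}{=} \nu$ for every $n$, and this equality persists under the conditioning on $\{S_\nu = h\}$, passing to the weak limit gives $(\nu \mid S_\nu = h) \stackrel{d}{=} \I$, which is the main claim.

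The assertion $\P(S_\I = S_\nu \mid S_\nu) = 1$ a.s.\ is then automatic: on the fibre $\{S_\nu = h\}$ of the regular conditional kernel, $\nu$ has support maximum equal to $h$ by the very construction of the conditional distribution, so any random measure distributed as $(\nu \mid S_\nu = h)$ must share this property, forcing $S_\I = h$ a.s. The main obstacle is the conditional use of Theorem \ref{main} when the initial measure $\nu_0 = \nu$ is random rather than deterministic. This is handled by disintegrating along the conditional law of $\nu_0$ given $S_\nu = h$, exploiting the independence of $(\beta_n)_{n \geq 1}$ from $\nu$, and observing that Theorem \ref{main} provides the same weak limit $\I$ for every deterministic initial measure with largest fitness value $h$; a Fubini-type argument then promotes this to convergence under the mixed initial law. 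A secondary point of care is to verify pathwise that $S_{\nu_n} = S_\nu$ for every $n$, for which the exclusion $\beta_n < 1$ a.s., combined with $S_{\nu_0} \geq S_Q$, is exactly what is needed.
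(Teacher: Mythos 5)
Your proposal is correct and follows essentially the same route as the paper: establish existence of a regular conditional distribution via measurability of $S_{(\cdot)}$ on the Polish space $M_1$, observe that conditioning on $S_\nu$ preserves the invariance structure (since $\beta$ is independent of $\nu$ and $S_\nu$ is $\sigma(\nu)$-measurable), and then invoke Theorem~\ref{main} together with a Fubini/dominated-convergence step over the random initial measure. The paper merely factors this into an intermediate Lemma (the constant-$h$ case, Lemma~\ref{h=i}) before conditioning, whereas you unroll the iteration directly under the conditioning on $\{S_\nu=h\}$; the underlying argument is the same.
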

\begin{remark}
Remark \ref{0sq} says that if there is no condensation at $h$, then $\I$ is supported on $[0,S_Q]$. Since $\I$ is an invariant measure, the above theorem entails that $\I\stackrel{d}{=}\I_Q.$ This assertion has been stated in Theorem \ref{critical}. 
\end{remark}

Using the notion of invariant measures, we can solve a distributional equation in the following example.  
For a survey on distributional equations, we refer to Aldous and  Bandyopadhyay \cite{AB05}. 

\vspace{2mm}

\noindent{\bf Example 1}. Consider a particular case: $Q$ is supported only on $\{c\}$ for some $c\in (0,1)$, and $h\in (c,1)$. Let $\nu$ be an invariant measure supported on $\{c\}\cup\{h\}$. Then $\nu$ can be written as $\nu=X\delta_c+(1-X)\delta_h$ where $X$ is a 
random variable taking values in $[0,1]$, and satisfies 
$$X\delta_c+(1-X)\delta_h\stackrel{d}{=}(1-\beta)\frac{cX\delta_c+h(1-X)\delta_h}{cX+h(1-X)}+\beta\delta_c,$$
where $\beta$ is independent of $X.$ The above display is equivalent to  
$$X\stackrel{d}{=}\frac{c+(h\beta-c)(1-X)}{c+(h-c)(1-X)}.$$
We are interested in a necessary and sufficient condition for the above equation to have a solution $X$ with $0\leq X<1$ a.s.\ (i.e., $\nu(h)>0$ a.s.). By Theorem \ref{com}, it is equivalently saying that there is condensation at $h$. 
By Theorem \ref{critical},  the necessary and sufficient condition is simply $\E[\ln (h(1-\beta)/c)]>0.$ Moreover as such $\nu$ is unique (in distribution), the solution $X$ is also unique (in distribution). 

The paper is organised as follows. Section \ref{relameas} and \ref{three} provide necessary preparations. Section \ref{fbs} and \ref{lfbs} analyse the finite backward sequence, which is the main tool used in this paper.  Section \ref{proofcritical} proves Theorem \ref{critical}. Section \ref{someinv} analyses the invariant measures, and the results obtained there will be used in Section \ref{proofmain} to prove the weak convergence in Theorem \ref{main}. Section \ref{proofcom} is dedicated to the proof of Theorem \ref{com}.

\section{Proofs}
\subsection{Relations between measures}\label{relameas}
We introduce some notations to describe relations between measures.

\noindent 1). For measures $u,v\in M$, we say $u$ is a component 
of $v$ on $[0,a]$ (resp. $[0,a)$), denoted by $u\leq_a v$ (resp. $\leq_{a-}$), if 
$$u(A)\leq v(A), \quad\text{ for any measurable set } A\subset [0,a] \,\,(\text{resp. } [0,a)).$$

For random measures $\mu,\nu\in M$, we write 
$\mu\leq_a^d \nu$ if there exists a coupling $(\mu',\nu')$ with $\mu',\nu'\in M$ such that 
\begin{equation}\label{precran}\mu'\leq_a \nu'  \text{  a.s. and }\,\, \mu'\stackrel{d}{=}\mu,\,\,\nu'\stackrel{d}{=}\nu.\end{equation}
The relation $\mu\leq_{a-}^d \nu$ is defined in a similar way. 

\noindent 2). For measures $(u_n)$  and $u$ in $M$, we introduce a notation
$$u_n\leq_a\stackrel{TV}{\longrightarrow}u$$
which means that $u_n\leq_a u_{n+1}$ for any $n$, and $u_n$ converges in total variation to $u.$
We define similarly $\leq_{a-}\stackrel{TV}{\longrightarrow}.$ 

\noindent 3). For real-valued random variables $\xi, \eta,$ we write the well known {\it stochastic ordering} $\xi\preceq\eta$, which holds if 
$$ \P(\xi\leq x)\geq \P(\eta\leq x),\quad \forall\, x\in\R.$$

\noindent 4). For any $u\in M_1,$ let the distribution function of $u$ be $$D_{u}(x):=u([0,x]),\quad \forall x\in[0,1].$$ 
For any $u, v\in M_1,$ we use the same notation $\preceq$ of stochastic ordering and write $u\preceq v$ if $D_u(x)\geq D_v(x)$ for any $x\in[0,1]$. This definition is natural, as $\xi\preceq \eta$ is equivalent to $u\preceq v$, if $u$ is the distribution of $\xi$ and $v$ is the distribution of $\eta.$ 

\begin{remark}\label{meascomp}We make a comment between $\leq_{a-}$ and $\preceq.$ For two probability measures $u,v\in M_1$, assume that $S_u=S_v=a$, then $u\leq_{a-}v$ implies that $v\preceq u.$ But the converse is not true. 
 \end{remark}

\begin{remark}
If we use notations 
$\leq_{a}, \leq_{a-}, \leq_{a}\stackrel{TV}{\longrightarrow}, \leq_{a-}\stackrel{TV}{\longrightarrow}, \preceq$  to describe the relations between random measures, it should be understood that they hold in the almost sure sense, or even pointwise sense (i.e., for every $\omega\in\Omega$) if possible.   

Similarly if we use $\leq, <,\geq,>,=,\neq$ to compare random variables, it should be understood in the almost sure sense, or pointwise sense. 
\end{remark}


\subsection{Three sequences}\label{three}
To study the asymptotic behaviour of $(P_n)$, we also introduce $(P_n'), (P_n'')$ so that the three forward sequences correspond respectively to 
$$((\beta_n), Q, P_0, h),\quad  ((\beta_n), Q', P_0', h'),\quad ((\beta_n), Q'', P_0'', h'').$$
The parameters of $(P_n')$ and $(P_n'')$ will be specified when they are used. The two sequences will converge weakly when they are used, and $(P_n)$ is compared to them or one of them to show that $(P_n)$ also converges weakly. The first place where this technique is used is in Section \ref{someinv}. 

  Using (\ref{P'}), we write
\begin{equation}\label{hg}
P_n(dx)=\M_n(dx)+\W_n(dx)
\end{equation}
with 
$$\M_n(dx)=\Big(\prod_{l=0}^{n-1}\frac{1-\beta_{l+1}}{\int yP_l(dy)}\Big)x^{n} P_0(dx) $$
and 
$$\W_n(dx)=\sum_{j=1}^{n}\Big(\prod_{l=j}^{n-1}\frac{1-\beta_{l+1}}{\int yP_l(dy)}\Big)b_jm_{n-j}Q^{n-j}(dx).$$
Therefore $\M_n$ is the contribution to $P_n$ made by $P_0$, while $\W_n$ is the contribution by the $Q$'s. 

Similarly we introduce
\begin{equation}\label{hg'}
P_n'(dx)=\M_n'(dx)+\W_n'(dx)
\end{equation}
\begin{equation}\label{hg''}
P_n''(dx)=\M_n''(dx)+\W_n''(dx)
\end{equation}
with $\M_n', \W_n', \M_n'', \W_n''$ defined correspondingly. 

\subsection{Introducing the finite backward sequences}\label{fbs}
 \subsubsection{The general model.}
We introduce the finite backward sequence $( P_j^n)=( P_j^n)_{0\leq j\leq n}$ for the general model which has parameters $n, (b_j)_{1\leq j\leq n},Q, P_n^n, h$ with $S_{P_n^n}=h$:
\begin{equation}\label{rev} P_j^n(dx)=(1-b_{j+1})\frac{x P_{j+1}^n(dx)}{\int y P_{j+1}^n(dy)}+b_{j+1}Q(dx), \quad \forall \,0\leq j\leq n-1.\end{equation}
Here $h,Q$ are from the general model and $P_n^n$ can be any measure in $M_1$ satisfying $S_{P_n^n}=h$. The $(b_j)_{1\leq j\leq n}$ are the first $n$ mutation probabilities in the general model. Here we use the index $j$ to indicate that we are dealing with a finite backward sequence.  

The sequence is {\it  backward} in the sense that we use $b_n$ to generate $P_{n-1}^n$ from  $P_{n}^n$, and use $b_{n-1}$ to generate $P_{n-2}^n$ from  $P_{n-1}^n$, etc. The $(b_j)$ are used backwards and the $(P_j^n)$ are generated backwards. 
The advantage to take a backward approach is that $( P_j^n)$ converges as $n$ tends to infinity, in contrast to the forward sequence. 

\begin{lemma}\label{limitgf}
In the general model, for the finite backward sequence with $P_n^n=\delta_h,$ $P_j^n$ converges in total variation to a limit, denoted by $\G_j=\G_{j,h}$, as $n$ goes to infinity with $j$ fixed, such that 
\begin{equation}\label{gf} \G_{j-1}(dx)=(1-b_{j})\frac{x \G_j(dx)}{\int y \G_j(dy)}+b_{j}Q(dx), \quad j\geq 1.\end{equation}
As a consequence, $\G_0:[0,1)^\infty\to M_1$ is a measurable function, with $\G_j=\G_0(b_{j+1},b_{j+2,\cdots})$ supported on $[0,S_Q]\cup \{h\}$ for any $j\geq0.$
\end{lemma}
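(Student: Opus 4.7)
The plan is to prove total-variation convergence by establishing a pair of monotonicity properties for $(P_j^n)$ in $n$, and then passing to the limit in \eqref{rev} to obtain \eqref{gf}.

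I would prove by downward induction on $j$ (for each fixed $n$, from $j=n$ to $j=0$) two claims: (a) $P_j^{n+1}\preceq P_j^n$ in stochastic order, and (b) $P_j^n\leq_{a}P_j^{n+1}$ for every $a<h$ (and in particular $P_j^n\leq_{S_Q}P_j^{n+1}$ when $h>S_Q$). The base case $j=n$ is trivial since $P_n^n=\delta_h$ is the stochastic maximum on $[0,1]$ and carries no mass below $h$. For the inductive step, (a) propagates because the one-step operator $T_j(\mu):=(1-b_{j+1})\frac{x\mu}{\int y\mu(dy)}+b_{j+1}Q$ preserves stochastic order: size-biasing is monotone in stochastic order on $M_1$, and convex combination with the fixed $Q$ is obviously monotone. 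For (b), (a) at level $j+1$ implies that the denominator $M_n:=\int yP_{j+1}^n(dy)$ is non-increasing in $n$ (the mean is a stochastic-order-monotone functional); combined with (b) at level $j+1$, for any Borel $A\subseteq[0,a]$ with $a<h$ we get, using that the $\delta_h$-component of $P_{j+1}^\cdot$ does not meet $A$,
\begin{align*}
P_j^n(A) &= (1-b_{j+1})\frac{\int_A xP_{j+1}^n(dx)}{M_n}+b_{j+1}Q(A)\\
&\leq (1-b_{j+1})\frac{\int_A xP_{j+1}^{n+1}(dx)}{M_{n+1}}+b_{j+1}Q(A)= P_j^{n+1}(A),
\end{align*}
which is (b) at level $j$.

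Once the monotonicity is in hand, for each $a<h$ one obtains $P_j^n\leq_a\stackrel{TV}{\longrightarrow}\G_j^{(a)}$ for some finite measure $\G_j^{(a)}$ on $[0,a]$, by monotone convergence applied to a uniformly bounded increasing family of measures. Letting $a\uparrow h$ and noting that $P_j^n(\{h\})=1-P_j^n([0,h))$ is itself non-increasing (since $P_j^n([0,h))=\sup_{a<h}P_j^n([0,a])$ is non-decreasing) and hence converges to some $\alpha_j\in[0,1]$, a routine $\varepsilon/3$-argument upgrades this to TV convergence of the full probability measure $P_j^n$ to a limit $\G_j$ supported on $[0,S_Q]\cup\{h\}$. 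Passing to the limit in \eqref{rev} uses the TV-continuity of $\mu\mapsto x\mu$ and of $\mu\mapsto\int y\mu(dy)$ (the latter bounded below by $b_{j+1}\int yQ(dy)>0$) to deliver \eqref{gf}. Measurability of $\G_0:[0,1)^\infty\to M_1$ is inherited from the measurability of each $P_0^n$ in $(b_1,\ldots,b_n)$ together with the fact that $M_1$ under the weak topology is Polish, so pointwise limits of measurable maps remain measurable; the shift identity $\G_j=\G_0(b_{j+1},b_{j+2},\ldots)$ is built into the construction, because the backward recursion from $\delta_h$ at time $n$ down to level $j$ uses precisely $(b_{j+1},\ldots,b_n)$.

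The main obstacle is (b). Statement (a) alone yields only pointwise convergence of CDFs, which does not suffice for TV convergence; the measure-level domination on $[0,a]$ is what allows the monotone convergence theorem to be applied directly. The subtle point is that (a) and (b) have to be propagated through the induction together, because (a) at level $j+1$ is exactly what ensures that the denominator $M_n$ is non-increasing, which is in turn required for (b) at level $j$.
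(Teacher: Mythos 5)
Your decomposition into (a) and (b) correctly identifies what makes the induction run—once you know (b) at level $j+1$ and that the means $\int yP_{j+1}^n(dy)$ are non-increasing in $n$, the displayed computation gives (b) at level $j$, and then TV convergence plus passage to the limit in \eqref{rev} are routine. This is essentially the paper's proof structure (the paper's monotonicity claim \eqref{prec} is exactly your (b), restated as $\leq_{h-}$).

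However, your inductive propagation of (a) is based on a false claim: size-biasing does \emph{not} preserve stochastic order on $M_1$. Take $\mu=\tfrac12\delta_{0.2}+\tfrac12\delta_{0.6}$ and $\nu=\tfrac12\delta_{0.4}+\tfrac12\delta_{0.8}$. Then $D_\mu\geq D_\nu$ everywhere, so $\mu\preceq\nu$; but size-biasing gives $\mu^*=\tfrac14\delta_{0.2}+\tfrac34\delta_{0.6}$ and $\nu^*=\tfrac13\delta_{0.4}+\tfrac23\delta_{0.8}$, and $D_{\mu^*}(0.5)=\tfrac14<\tfrac13=D_{\nu^*}(0.5)$, so $\mu^*\not\preceq\nu^*$. (One can even arrange $S_\mu=S_\nu$ by adding a common top atom, so restricting to a common largest-support point does not rescue the claim.) Thus your argument ``$T_j$ preserves stochastic order'' fails, and with it the separate inductive proof of (a).

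The gap is easily repaired, and the repair is precisely what the paper does via Remark \ref{meascomp}: (a) at level $j+1$ need not be proved separately at all, because it \emph{follows} from (b) at level $j+1$. Indeed, if $u\leq_{h-}v$ for two probability measures on $[0,h]$, then for $x<h$ one has $D_u(x)=u([0,x])\leq v([0,x])=D_v(x)$, and $D_u(x)=D_v(x)=1$ for $x\geq h$; hence $v\preceq u$. Applied to $P_{j+1}^n\leq_{h-}P_{j+1}^{n+1}$ this yields $P_{j+1}^{n+1}\preceq P_{j+1}^n$, hence $M_{n+1}\leq M_n$, which is all your (b)-step requires. With this replacement you need only a single inductive hypothesis, (b), and the rest of your argument—TV convergence on $[0,a]$ by monotonicity, convergence of the atom at $h$, and passage to the limit in the recursion—goes through as you describe. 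One small additional point: you justify $\int y\G_j(dy)>0$ by the lower bound $b_{j+1}\int yQ(dy)$, but $b_{j+1}$ may be $0$; the cleanest bound comes from the structural representation \eqref{bsfas}, which shows $\G_j$ is a convex combination of $\delta_h$ and $Q,Q^1,Q^2,\dots$, all of which have mean at least $\int yQ(dy)>0$.
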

\begin{remark}
We write $\G_{j,h}$ when $h$ has to be specified for clarity. Otherwise we write $\G_j$. This logic applies to other terms which will appear later. 
\end{remark}
\begin{remark}\label{conh}
Note that, by \eqref{gf}, either $\G_j(h)$'s are all zero or all strictly positive. 
\end{remark}
\begin{proof}
We prove a stronger version  below
\begin{equation}\label{preclong}\text{ For any } j, \quad P_j^n\leq_{h-}\stackrel{TV}{\longrightarrow} \G_j,  \text{ as } n\to\infty.\end{equation}
It suffices to show that 
\begin{equation}\label{pjn}P_j^n\leq_{h-}P_j^{n+1},\end{equation}
as $P_j^n$'s are all supported on $[0,S_Q]\cup\{h\}$. 

First of all, $P_n^n=\delta_h\leq_{h-} P_n^{n+1}$. Assume for some $1\leq j\leq n$, we have $P_j^n\leq_{h-} P_j^{n+1}$. By definition
\begin{equation}\label{pp+1}P_{j-1}^n(dx)=(1-b_{j})\frac{xP_{j}^n(dx)}{\int yP_{j}^n(dy)}+b_{j}Q(dx), \,\, P_{j-1}^{n+1}(dx)=(1-b_{j})\frac{x P_{j}^{n+1}(dx)}{\int yP_{j}^{n+1}(dy)}+b_{j}Q(dx).\end{equation}
Since $P_j^n\leq_{h-} P_j^{n+1}$ (hence  $P_j^{n+1} \preceq P_j^n$, see Remark \ref{meascomp}), we have $$\int yP_{j}^{n+1}(dy)\leq \int yP_{j}^n(dy)$$ and thus 
$$\frac{x}{\int yP_{j}^n(dy)}\leq \frac{x}{\int yP_{j}^{n+1}(dy)}, \quad \forall x\in [0,1].$$ 
Together with $P_j^n\leq_{h-}  P_j^{n+1}$ and (\ref{pp+1}), we get $P_{j-1}^n\leq_{h-} P_{j-1}^{n+1}$. The induction shows that 
\begin{equation}\label{prec}P_j^n\leq_{h-} P_j^{n+1} , \text{ for any } 0\leq j\leq n, n\geq  0. \end{equation}
This completes the proof.\qed
\end{proof}

The monotonicity analysis in the above proof will be used many times in this paper, as it applies to both backward and forward sequences. An immediate application is the following:
we can compare $(\G_j)$ and $(\G_j')=(\G_{j,h'})$ for $S_Q\leq h< h'\leq 1$ with the same $(b_j), Q$.

\begin{corollary}\label{kk'}
Let $(\G_j)$ and $(\G_j')$ be the above sequences. Then we have 
\begin{equation}\label{s1}\G_j'\leq_{h-}\G_j, \quad \G_j(h)\leq \G_j'(h'), \quad \forall j\geq0.\end{equation}
Moreover we have the exact equalities in the above display for any $h\in[S_Q, h']$ if and only if $\G_{0}'(h')=0$. 
In this case $(\G_j)$ and $(\G_j')$ are all supported on $[0,S_Q]$, and both equal to $(\G_{j,S_Q})$. 
\end{corollary}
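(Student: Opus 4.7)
The plan is to work with the finite backward sequences. Let $(P_j^n)$ and $(\widetilde P_j^n)$, $0\leq j\leq n$, be defined by the recursion \eqref{rev} with the shared parameters $(b_j),Q$ but initialised at $P_n^n=\delta_h$ and $\widetilde P_n^n=\delta_{h'}$ respectively. By Lemma \ref{limitgf} they converge in total variation to $\G_j$ and $\G_j'$ as $n\to\infty$ for each fixed $j$. I will establish the claimed relations at the finite-$n$ level and then pass to the limit.

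The first step is a backward induction on $j$, from $j=n$ down to $j=0$, showing $\widetilde P_j^n\leq_{h-}P_j^n$. The base is immediate since $\delta_h$ and $\delta_{h'}$ both vanish on $[0,h)$. For the inductive step the key subclaim is the first-moment ordering $\int y\,P_{j+1}^n(dy)\leq\int y\,\widetilde P_{j+1}^n(dy)$. To obtain it I split each integral at $h$: on $[0,h)$ the inductive hypothesis together with the bound $y<h$ controls the contribution; on $[h,1]$ the support constraints pin the residual mass of $P_{j+1}^n$ at $h$ and that of $\widetilde P_{j+1}^n$ at the strictly larger point $h'$, and since both are probability measures with $\widetilde P_{j+1}^n([0,h))\leq P_{j+1}^n([0,h))$ by hypothesis, $\widetilde P_{j+1}^n$ carries at least as much residual mass at a larger location. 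Feeding this into \eqref{rev}, the $Q$-contributions cancel, the selection factor $x/\int y\,P_{j+1}^n(dy)$ pointwise dominates its primed counterpart, and the numerators on $[0,h)$ are already ordered, so $\leq_{h-}$ propagates. Taking $n\to\infty$ via Lemma \ref{limitgf} yields $\G_j'\leq_{h-}\G_j$.

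The atom inequality $\G_j(h)\leq\G_j'(h')$ is then a bookkeeping consequence of the support structure: $\G_j(h)=1-\G_j([0,h))$ because $\G_j$ is supported in $[0,S_Q]\cup\{h\}$, while $\G_j'(h')=1-\G_j'([0,h))$ because $\G_j'$ is supported in $[0,S_Q]\cup\{h'\}$ and places no mass on $[h,h')$ once $h>S_Q$; the $\leq_{h-}$ bound just established finishes the comparison. For the equality characterization, the direction $\G_0'(h')=0\Rightarrow$ ``equalities everywhere'' goes via Remark \ref{conh}, which promotes $\G_0'(h')=0$ to $\G_j'(h')=0$ for every $j$, so that $\G_j'$ is supported on $[0,S_Q]$. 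Recursion \eqref{gf} then identifies $(\G_j')$ with the backward limit initialised inside $[0,S_Q]$, whose uniqueness (Lemma \ref{limitgf}) gives $\G_j'=\G_{j,S_Q}$; the atom inequality collapses $\G_j(h)$ to zero for each $h\in[S_Q,h']$, and the same uniqueness forces $\G_j=\G_{j,S_Q}$. The converse is the contrapositive: a strictly positive $\G_0'(h')$ forces at least one of the relations in the display to be strict through the explicit support structure.

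The main technical obstacle is the first-moment comparison in the inductive step. It is not tautological, since $P_{j+1}^n$ could a priori have more first-moment mass on $[0,h)$ than $\widetilde P_{j+1}^n$; the compensation must be extracted precisely from the atom positions at the top, exploiting $h'\geq h$ together with the probability-measure normalisation to convert the ``missing'' mass on $[0,h)$ into strictly more weight at the larger atom.
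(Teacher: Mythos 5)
Your overall route is the paper's: set up the two finite backward sequences initialised at $\delta_h$ and $\delta_{h'}$, propagate the orderings by a backward induction whose engine is the first-moment comparison, and pass to the limit via Lemma \ref{limitgf}. The first-moment step you identify as the "main technical obstacle" — extracting the compensation from the atoms at the top, using the probability-measure normalisation — is the correct content hiding behind the paper's one-line reference to "the same monotonicity analysis," and your sketch of it is sound.

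Two genuine gaps remain. First, your argument for the atom inequality $\G_j(h)\leq\G_j'(h')$ is, by your own parenthetical, valid "once $h>S_Q$"; but the statement allows $h=S_Q$, and there $\G_j'$ may carry mass at $S_Q=h$, so the bookkeeping $\G_j'(h')=1-\G_j'([0,h))$ breaks. This is not a cosmetic omission: take $Q=\delta_{1/2}$, $h=S_Q=1/2$, $h'=1$, $b_j\equiv b<1/2$; then $\G_j=\delta_{1/2}$ gives $\G_j(h)=1$, while a short computation gives $\G_j'=(1-2b)\delta_1+2b\delta_{1/2}$, so $\G_j'(h')=1-2b<1$ and the claimed inequality fails. (The paper's own proof is equally terse here, and the downstream uses of the corollary rest only on $\leq_{h-}$ and the first-moment inequality, which do hold in this case; but you should surface the restriction rather than slide over it.) Second, your converse — "a strictly positive $\G_0'(h')$ forces at least one of the relations to be strict through the explicit support structure" — is a restatement of the conclusion, not an argument. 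The actual chain, as the paper runs it, is: Remark \ref{conh} upgrades $\G_0'(h')>0$ to $\G_j'(h')>0$ for all $j$; this, combined with $\G_j'\leq_{h-}\G_j$ and the disjointness of the top atoms, forces the strict first-moment gap $\int y\,\G_j'(dy)>\int y\,\G_j(dy)$; feeding this into the one-step recursion \eqref{gf} makes the size-biasing densities strictly different on $(0,h)$, so $\G_{j-1}'\neq\G_{j-1}$ on $[0,h)$ and hence $\G_{j-1}(h)<\G_{j-1}'(h')$. Without this chain the "only if" direction is unproved.
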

\begin{proof}
Let $( P_j^n)$ be the sequence in Lemma \ref{limitgf}. Let $(P_{j,h'}^n)$ be the variant of $( P_j^n)$ with $P_n^n=\delta_{h'}.$
By following the same monotonicity analysis for proving \eqref{pjn}, we obtain 
$$P_{j,h'}^n\leq_{h-}P_j^n, \quad  P_j^n(h)\leq P_{j,h'}^n(h'), \quad\forall 0\leq j\leq n.$$
By Lemma \ref{limitgf}, $P_{j,h'}^n\stackrel{TV}{\longrightarrow}\G_j'$ and $P_{j}^n\stackrel{TV}{\longrightarrow}\G_j,$ as $n\to\infty.$
Then we obtain \eqref{s1}.  

Now let us prove the if-and-only-if statement. If $\G_0'(h')=0$, then by \eqref{s1}, $\G_0(h)=0$. Using Remark \ref{conh}, 
$\G_j'(h')=0, \G_j(h)=0$ for any $j$, and so \eqref{s1} holds with equalities. For the other direction, if $\G_0'(h')>0$, then again by Remark \ref{conh}, $\G_j'(h')>0$ for any $j$. Using \eqref{s1}, it holds that
\begin{equation}\label{'>j}\int y\G_j'(dy)>\int y\G_j(dy),\quad \forall j.\end{equation}
Similar to \eqref{pp+1}
$$\G_{j-1}(dx)=(1-b_{j})\frac{x\G_{j}(dx)}{\int y\G_{j}(dy)}+b_{j}Q(dx), \,\, \G'_{j-1}(dx)=(1-b_{j})\frac{x \G'_{j}(dx)}{\int y\G'_{j}(dy)}+b_{j}Q(dx).$$
As \eqref{'>j} entails that  $\frac{1-b_{j}}{\int y\G_{j}(dy)}>\frac{1-b_{j}}{\int y\G'_{j}(dy)}$, and using again \eqref{s1}, we obtain $\G'_{j-1}\leq_{h-}\G_{j-1}$ but they are not equal on $[0,h)$. Since they are probability measures, we have $\G_{j-1}(h)<\G_{j-1}'(h')$ for any $j$.  Then the proof is complete.

If \eqref{s1} holds with equalities, $(\G_j)=(\G_j')$ are all supported on $[0,S_Q]$. To show that they are equal to $(\G_{j,S_Q})$, we only have to take $h=S_Q$ and apply the equalities in \eqref{s1}. 
\end{proof}

 \subsubsection{The random model.}
Our goal of the paper is the random model, which is a randomised general model. Since $(\G_j)$ has parameters $(b_{j+1},b_{j+2},\cdots)$ and $Q, h$, we can define 
$$\I_j=\I_{j,h}:=\G_0(\beta_{j+1},\beta_{j+2},\cdots).$$
Therefore $\I_j$ is the quenched limit of the finite backward sequences in the random model with $P_n^n=\delta_h$. 
Thanks to Lemma \ref{limitgf}, we have the following result. 

\begin{corollary}\label{ier}
The sequence $(\I_j)=(\I_j)_{j\geq 0}$ is stationary ergodic and satisfies
\begin{equation}\label{if}
 \I_{j-1}(dx)=(1-\beta_{j})\frac{x \I_j(dx)}{\int y \I_j(dy)}+\beta_{j}Q(dx), \quad j\geq1.
\end{equation}
\end{corollary}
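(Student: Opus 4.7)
The plan is to transfer the deterministic properties of the backward sequence $(\G_j)$ from Lemma \ref{limitgf} to the random model by substituting the i.i.d.\ sequence $(\beta_n)$ for the parameters $(b_n)$. The essential ingredient already provided is the measurability of the map $\G_0:[0,1)^\infty\to M_1$, which guarantees that each $\I_j=\G_0(\beta_{j+1},\beta_{j+2},\ldots)$ is a bona fide random element of $M_1$.

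For stationarity and ergodicity, I would let $T$ denote the one-sided shift on $[0,1)^\infty$. Since $(\beta_n)$ is i.i.d., its law is a product measure, which is $T$-invariant and ergodic (the classical Bernoulli shift, equivalently Kolmogorov's zero-one law applied to tail events). Writing $\I_j=\G_0\bigl(T^j(\beta_1,\beta_2,\ldots)\bigr)$ exhibits $(\I_j)_{j\geq 0}$ as a deterministic measurable factor of a stationary ergodic sequence, and it is therefore itself stationary and ergodic.

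For the recursion \eqref{if}, I would rewrite the deterministic identity \eqref{gf} of Lemma \ref{limitgf} (applied at index $j=1$) as the functional equation
$$\G_0(c_1,c_2,\ldots)(dx)=(1-c_1)\frac{x\,\G_0(c_2,c_3,\ldots)(dx)}{\int y\,\G_0(c_2,c_3,\ldots)(dy)}+c_1\,Q(dx),$$
valid for every $(c_n)\in[0,1)^\infty$. Applying this identity with $(c_1,c_2,\ldots)$ replaced by the shifted sequence $(\beta_j,\beta_{j+1},\ldots)(\omega)$, and recognising $\I_{j-1}(\omega)=\G_0(\beta_j,\beta_{j+1},\ldots)$ and $\I_j(\omega)=\G_0(\beta_{j+1},\beta_{j+2},\ldots)$, delivers \eqref{if} pointwise in $\omega$.

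The only slightly delicate point is ensuring that the denominator $\int y\,\I_j(dy)$ is almost surely positive, so that the normalisation in \eqref{if} is well-defined. This is not really an obstacle: since $Q\neq\delta_0$ (so $\int y\,Q(dy)>0$) and $h\geq S_Q>0$, the deterministic backward recursion for $(\G_j)$ forces $\int y\,\G_j(dy)>0$ for every realisation of $(b_{j+1},b_{j+2},\ldots)\in[0,1)^\infty$, so the substitution produces no measure-zero exceptions. Beyond this point, the argument reduces to the standard fact that a measurable factor of a stationary ergodic process is stationary ergodic, combined with a direct substitution into a deterministic functional identity.
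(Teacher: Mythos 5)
Your proposal is correct and follows essentially the same route as the paper: the paper also notes that $(\beta_j)$ is i.i.d.\ hence stationary ergodic, invokes the measurability of $\G_0$ together with the fact that a measurable factor of a stationary ergodic sequence is stationary ergodic (this is the paper's Lemma~\ref{ka}, cited from Kallenberg), and states that \eqref{if} is ``inherited from \eqref{gf}'' --- exactly your pointwise substitution into the deterministic functional identity. Your extra remark about the denominator $\int y\,\I_j(dy)$ being positive is a sound observation that the paper leaves implicit, but it does not change the argument.
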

\begin{remark}
The equality \eqref{if} holds in the pointwise sense.  In other words, given any realisation of $(\beta_j)$ (or equivalently, conditioning on $(\beta_j)$), the equality holds for any $j$ as in the general model. In the sequel, when we present results regarding $(\I_j)$, then {\it conditioning on $(\beta_j)$} should be understood as {\it in the pointwise sense}. Sometimes we omit saying either of them when the context is clear. 
\end{remark}
The proof of Corollary \ref{ier} requires the following lemma which is proved by Lemma 9.5 in \cite{K97}.



\begin{lemma}\label{ka}
Let $(S,\SS)$ and $(S',\SS')$ be measurable spaces. Let  
$(\alpha_j)\in S^\infty$ be a stationary ergodic sequence of random variables. Let $f:S^\infty\to S'$ be a measurable function. Then $\left(f(\alpha_j,\alpha_{j+1},\cdots)\right)$ is also stationary ergodic. 
\end{lemma}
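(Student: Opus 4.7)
The plan is to reduce the claim to the standard fact that a measurable factor of an ergodic measure-preserving system is ergodic, once we have set up the right commuting diagram of shifts.

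First I would set up notation. Let $\theta:S^\infty\to S^\infty$ and $\theta':(S')^\infty\to(S')^\infty$ denote the left shifts, and define the coordinatewise map $F:S^\infty\to (S')^\infty$ by
$$F(x_0,x_1,\ldots)=\bigl(f(x_0,x_1,\ldots),\,f(x_1,x_2,\ldots),\,f(x_2,x_3,\ldots),\,\ldots\bigr).$$
Measurability of $f$ and the fact that cylinder sets generate the product $\sigma$-algebra on $(S')^\infty$ give measurability of $F$. The crucial algebraic identity is the intertwining
$$\theta'\circ F \;=\; F\circ\theta,$$
which is immediate from the definition of $F$ because shifting the input by one and then forming $F$ drops the zeroth coordinate of the output exactly as $\theta'$ does.

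Next I would verify stationarity of $Y_j:=f(\alpha_j,\alpha_{j+1},\ldots)$. Let $\mu$ denote the distribution of $(\alpha_j)$ on $S^\infty$; by hypothesis $\mu$ is $\theta$-invariant. The distribution of $(Y_j)$ is the pushforward $\nu:=F_*\mu$. The intertwining above gives $\theta'_*\nu = \theta'_*F_*\mu = F_*\theta_*\mu = F_*\mu = \nu$, so $\nu$ is $\theta'$-invariant, which is exactly stationarity of $(Y_j)$.

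For ergodicity, let $A'\subset(S')^\infty$ be a measurable $\theta'$-invariant set, i.e.\ $(\theta')^{-1}(A')=A'$. Using $\theta'\circ F=F\circ\theta$ one gets $\theta^{-1}(F^{-1}(A'))=F^{-1}((\theta')^{-1}(A'))=F^{-1}(A')$, so $F^{-1}(A')$ is a $\theta$-invariant measurable subset of $S^\infty$. Ergodicity of $(\alpha_j)$ under $\theta$ then gives $\mu(F^{-1}(A'))\in\{0,1\}$, hence $\nu(A')\in\{0,1\}$, proving ergodicity of $(Y_j)$.

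The only real subtlety is making sure ergodicity is formulated as invariance of the underlying product measure under the shift (rather than as a mixing-type condition), so that the factor-map argument applies verbatim; this is the formulation used in Kallenberg \cite{K97}, which is why the lemma is quoted from there. No computation beyond the intertwining identity is required.
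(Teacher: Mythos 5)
Your proposal is correct: the intertwining identity $\theta'\circ F=F\circ\theta$ gives stationarity of the pushforward law, and pulling back shift-invariant sets through $F$ reduces ergodicity of $(Y_j)$ to ergodicity of $(\alpha_j)$. Note, however, that the paper does not prove this lemma at all --- it simply invokes Lemma~9.5 of Kallenberg \cite{K97}, so there is no in-paper argument to compare against; what you have written is precisely the standard factor-map proof that underlies that citation. One small remark: if ergodicity is formulated with almost invariant sets (i.e.\ $\nu((\theta')^{-1}A'\,\triangle\,A')=0$ rather than exact invariance), your argument goes through unchanged, since $\mu\bigl(\theta^{-1}F^{-1}(A')\,\triangle\,F^{-1}(A')\bigr)=\nu\bigl((\theta')^{-1}A'\,\triangle\,A'\bigr)=0$, so the two common formulations cause no difficulty.
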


\begin{proof}[Proof of Corollary \ref{ier}]
Since $(\beta_j)$ is i.i.d., it is stationary ergodic.  As $\G_0$ is a measurable function 
from $[0,1)^\infty$ to $M_1$, we apply Lemma \ref{ka} to obtain that $(\I_j)=(\G_0(\beta_{j+1},\beta_{j+2},\cdots))$
is also stationary ergodic. The recursive equation (\ref{if}) is inherited from (\ref{gf}). \qed
\end{proof}

Since $(\I_j)$ is stationary ergodic, all $\I_j$'s have the same distribution. 
We denote by $\I:=\I_0=\I_{0,h}$ which is the weak limit appeared in Theorem \ref{main}. 
The reason to drop off the index is to make it stand out from the backward context, when it is appropriate to do so. 
The term $\I_Q$ used in Theorem \ref{critical} is in fact $\I_{0,S_Q}.$

We comment further on the importance of finite backward sequences. Let $(P_n)$ be a forward sequence and $(P_j^n)$ the finite backward sequence with $P_n^n=P_0$, both in the random model with the same $(\beta_j)$ and $Q$. Since $(\beta_j)$ is i.i.d., we have
\begin{equation}\label{for=back}(P_0, P_1,\cdots, P_n)\stackrel{d}{=}(P_n^n, P_{n-1}^n,\cdots, P_0^n).\end{equation}
So showing the weak convergence of $(P_n)_{n\geq 0}$ is equivalent to showing that of $(P_0^n)_{n\geq 0}.$ But investigating the finite backward sequences, via the general model, appears to be more convenient. In general a dynamical system is easier to handle if we take a
backward point of view, see Diaconis and Freedman \cite{DF99}. 

\subsection{Finer analysis of the finite backward sequences}\label{lfbs}
\subsubsection{The general model.}
We consider $(P_j^n)$ with 
$P_n^n=\delta_h$, the one in Lemma \ref{limitgf}. Developing (\ref{rev}) we obtain
\begin{align}P_0^n(dx)&\label{devback1}=\left(\prod_{l=1}^n\frac{1-b_l}{\int yP_l^n(dy)}\right)x^nP_n^n(dx)+\sum_{j=0}^{n-1}\left(\prod_{l=1}^j\frac{1-b_l}{\int yP_l^n(dy)}\right)b_{j+1}m_jQ^j(dx)&\\
&\label{devback}=\left(\prod_{l=1}^n\frac{h(1-b_l)}{\int yP_l^n(dy)}\right)\delta_h(dx)+\sum_{j=0}^{n-1}\left(\prod_{l=1}^j\frac{1-b_l}{\int yP_l^n(dy)}\right)b_{j+1}m_jQ^j(dx).&\end{align}
We refer to \eqref{P'} for the expansion of the forward sequence $(P_n)$. 
\begin{proposition}\label{inv4}
Let $(P_j^n)$ be the finite backward sequence in the general model with 
$P_n^n=\delta_h$. Then for the sequence $(\G_j)$, we have 
\begin{equation}\label{bsfas}\G_0(dx){=}G_0\delta_h(dx)+\sum_{j=0}^{\infty}\prod_{l=1}^{j}\frac{(1-b_l)}{\int y\G_l(dy)}b_{j+1}m_jQ^j(dx), \end{equation}
where the second term on the right side of (\ref{devback}) converges to that of (\ref{bsfas}): 
\begin{equation}\label{pk}
\sum_{j=0}^{n-1}\left(\prod_{l=1}^j\frac{1-b_l}{\int yP_l^n(dy)}\right)b_{j+1}m_jQ^j(dx)\leq_{S_Q-}\stackrel{TV}{\longrightarrow} \sum_{j=0}^{\infty}\prod_{l=1}^{j}\frac{(1-b_l)}{\int y\G_l(dy)}b_{j+1}m_jQ^j(dx)
\end{equation}
and the term $G_0=G_{0,h}$ satisfies the following assertions: 
\begin{align}
&\label{k0pn}\prod_{l=1}^n\frac{h(1-b_l)}{\int yP_l^n(dy)}\text{ decreases in }n \text{ and converges to }G_0,&\\
&\label{k0=}G_0=1-\sum_{j=0}^{\infty}\prod_{l=1}^{j}\frac{(1-b_l)}{\int y\G_l(dy)}b_{j+1}m_j\in[0,1],&\\
&\label{k0q0}G_0=\G_0(h) \text{ , if }Q(h)=0,&\\
&\label{k0k}\int \left(\frac{y}{h}\right)^n\G_n(dy)\prod_{l=1}^n\frac{h(1-b_l)}{\int y\G_l(dy)}\text{ decreases in }n \text{ and converges to } G_0, \text{ if }G_0>0.&
\end{align}
Moreover if we define $G_j$ for $\G_j$ similarly as $G_0$ for $\G_0$, we have 
\begin{equation}\label{koj} G_{j-1}=G_j\frac{h(1-b_j)}{\int y\G_j(dy)}, \quad \forall j\geq 1.\end{equation}
As a consequence $G_j$'s are either all $0$ or all strictly positive. 
\end{proposition}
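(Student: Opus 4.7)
The plan is to pass to the limit $n\to\infty$ in the expansion \eqref{devback} of $P_0^n$ using the monotonicity $P_l^n\leq_{h-}P_l^{n+1}$ from Lemma \ref{limitgf}. Since every $P_l^n$ is a probability measure supported on $[0,S_Q]\cup\{h\}$, Remark \ref{meascomp} gives $P_l^{n+1}\preceq P_l^n$, so $\int y\,P_l^n(dy)$ is non-increasing in $n$ with limit $\int y\,\G_l(dy)$. Consequently each coefficient $\prod_{l=1}^{j}\frac{1-b_l}{\int yP_l^n(dy)}\,b_{j+1}$ is non-decreasing in $n$, and the ``$Q$-part'' $\tilde P_0^n:=\sum_{j=0}^{n-1}\prod_{l=1}^{j}\frac{1-b_l}{\int yP_l^n(dy)}\,b_{j+1}m_jQ^j$ is non-decreasing in the sense $\leq_{S_Q-}$, with total mass bounded by $1$. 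Because $P_0^n$ has total mass $1$, the $\delta_h$-coefficient $K_n:=\prod_{l=1}^{n}\frac{h(1-b_l)}{\int yP_l^n(dy)}$ is non-increasing and converges to some $G_0\in[0,1]$, establishing \eqref{k0pn}. Monotone convergence then gives \eqref{k0=} and the pointwise limit in \eqref{pk}, which is upgraded to total variation by a uniform bound on the residual mass. Combining with $P_0^n\stackrel{TV}{\to}\G_0$ from Lemma \ref{limitgf} delivers \eqref{bsfas}, and \eqref{k0q0} is immediate since $Q(h)=0$ forces $Q^j(h)=0$ for every $j$.

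The identity \eqref{koj} is the central step and I would derive it algebraically from \eqref{bsfas}. Using $\int yQ^k(dy)=m_{k+1}/m_k$, a direct computation gives
\[\int y\,\G_j(dy)\;=\;hG_j+\sum_{k\geq 0}\prod_{l=1}^{k}\frac{1-b_{j+l}}{\int y\G_{j+l}(dy)}\,b_{j+k+1}\,m_{k+1}.\]
Applying \eqref{k0=} at level $j-1$, extracting the $k=0$ term $b_j$, and reindexing the remaining sum by $k\mapsto k+1$ produces the prefactor $\frac{1-b_j}{\int y\G_j(dy)}$ times precisely the quantity $\int y\G_j(dy)-hG_j$ isolated above. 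After simplification one finds $1-G_{j-1}=1-\frac{h(1-b_j)G_j}{\int y\G_j(dy)}$, which is \eqref{koj}. The dichotomy that the $G_j$ are all zero or all strictly positive then follows by two-sided induction, since $\frac{h(1-b_j)}{\int y\G_j(dy)}$ is a positive finite number for every $j$.

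For \eqref{k0k}, assume $G_0>0$, so every $G_n>0$ by the dichotomy, and iterate \eqref{koj} to obtain $\prod_{l=1}^{n}\frac{h(1-b_l)}{\int y\G_l(dy)}=G_0/G_n$. Substituting \eqref{bsfas} for $\G_n$ into $\int(y/h)^n\G_n(dy)$, using $\int(y/h)^nQ^k(dy)=m_{n+k}/(h^nm_k)$, and telescoping $\prod_{l=1}^{n}\cdots\prod_{l=1}^{k}\cdots$ into $\prod_{l=1}^{n+k}\cdots$, the quantity in \eqref{k0k} collapses to
\[G_0+\sum_{j=n}^{\infty}\prod_{l=1}^{j}\frac{1-b_l}{\int y\G_l(dy)}\,b_{j+1}\,m_j,\]
that is, $G_0$ plus the tail of the convergent series in \eqref{k0=}. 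This tail is non-negative, manifestly non-increasing in $n$, and vanishes as $n\to\infty$.

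The main obstacle will be the algebraic manipulation leading to \eqref{koj}: one has to recognise $\int y\G_j(dy)-hG_j$ as exactly the expression that appears after shifting indices in the series for $1-G_{j-1}$, and only then does the ratio $G_{j-1}/G_j$ collapse to the neat form $\frac{h(1-b_j)}{\int y\G_j(dy)}$. Everything else is bookkeeping around the decomposition \eqref{devback} combined with monotone and dominated convergence.
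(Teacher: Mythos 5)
Your proof is correct and follows the same overall route as the paper: pass to the limit in the expansion \eqref{devback} using the monotonicity from Lemma~\ref{limitgf}, obtain \eqref{bsfas}--\eqref{k0q0}, and then derive \eqref{koj} and \eqref{k0k}. One small point in your favour: you correctly note that $\int y\,P_l^n(dy)$ is \emph{non-increasing} in $n$ (so that the factors $\frac{1-b_l}{\int yP_l^n(dy)}$ increase), whereas the paper's proof misstates this as ``increases''; the monotone-convergence argument only works with the direction you give.

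Where you differ mildly from the paper is in how \eqref{koj} and \eqref{k0k} are obtained. The paper derives \eqref{koj} by substituting the shifted version of \eqref{bsfas} for $\G_j$ into the recursion \eqref{gf} and comparing coefficients, and derives \eqref{k0k} by expanding \eqref{gf} to level $n$ and matching against \eqref{bsfas}. You instead work scalarly: you compute $\int y\,\G_j(dy)$ from the expansion, apply \eqref{k0=} at level $j-1$ (using that $G_{j-1}$ obeys the same formula by the shift-invariance of the construction), peel off the $k=0$ term and reindex, and the ratio $G_{j-1}/G_j$ collapses to $\frac{h(1-b_j)}{\int y\G_j(dy)}$; for \eqref{k0k} you iterate \eqref{koj} to get $\prod_{l=1}^n\frac{h(1-b_l)}{\int y\G_l(dy)}=G_0/G_n$ and reduce the left side to $G_0$ plus a tail of the series in \eqref{k0=}. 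I checked the index shifts and the identities $m_k\int yQ^k(dy)=m_{k+1}$, $\int(y/h)^nQ^k(dy)=m_{n+k}/(h^nm_k)$, and the telescoping of products, and they are all correct. Your version has the small advantage of not relying implicitly on uniqueness of the decomposition of $\G_{j-1}$ into $\delta_h$-part plus $Q^k$-parts (which is not literally unique when $Q(h)>0$), whereas the paper's ``compare coefficients'' reading requires one to track $G_j$ through the explicit convergent prefactor rather than through the measure decomposition. The two approaches are of comparable length and difficulty; yours is more computational and self-contained, the paper's is more structural.
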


\begin{proof}
By (\ref{preclong}), $\int yP_l^n(dy)$ increases in $n$ and converges to $\int y\G_l(dy)$ as $n\to\infty$. Then using (\ref{devback}), we obtain (\ref{pk}). Integrating on both sides of \eqref{devback}, we use \eqref{pk} to deduce that
$$\prod_{l=1}^n\frac{h(1-b_l)}{\int yP_l^n(dy)}=1-\int\sum_{j=0}^{n-1}\left(\prod_{l=1}^j\frac{1-b_l}{\int yP_l^n(dy)}\right)b_{j+1}m_jQ^j(dx)$$
decreases in $n$ and converges to the limit 
$$1-\int\sum_{j=0}^{\infty}\left(\prod_{l=1}^j\frac{1-b_l}{\int y\G_l(dy)}\right)b_{j+1}m_jQ^j(dx)=:G_0.$$
So \eqref{bsfas}, \eqref{k0pn} and \eqref{k0=} are proved. 

From (\ref{bsfas}) we observe (\ref{k0q0}). To show (\ref{k0k}), 
we develop (\ref{gf}) as follows
\begin{align*}\G_0(dx)
&=\left(\prod_{l=1}^n\frac{1-b_l}{\int y\G_l(dy)}\right)x^n\G_n(dx)+\sum_{j=0}^{n-1}\left(\prod_{l=1}^j\frac{1-b_l}{\int y\G_l(dy)}\right)b_{j+1}m_jQ^j(dx)&\nonumber\\
&=\left(\int \left(\frac{y}{h}\right)^n\G_n(dy)\prod_{l=1}^n\frac{h(1-b_l)}{\int y\G_l(dy)}\right)\frac{x^n\G_n(dx)}{\int y^n\G_n(dy)}+\sum_{j=0}^{n-1}\left(\prod_{l=1}^j\frac{1-b_l}{\int y\G_l(dy)}\right)b_{j+1}m_jQ^j(dx).&
\end{align*}
Combining the above display with (\ref{bsfas}) and \eqref{pk},  we obtain (\ref{k0k}),  and also that $\frac{x^n\G_n(dx)}{\int y^n\G_n(dy)}$ converges weakly to $\delta_h$.
Finally, combining (\ref{gf}) and (\ref{bsfas}) leads to (\ref{koj}).\qed
\end{proof}
\begin{remark}\label{convergence}
The proposition implies that $(P_0^n)$ with $P_n^n=\delta_h$ in the random model converges in total variation to $\I=\I_0$, pointwise. Then by (\ref{for=back}), $(P_n)$ in the random model with $P_0=\delta_h$ converges weakly to $\I.$ Therefore Theorem \ref{main} is proved for the particular case with $P_0=\delta_h.$ As will be clear later (Section \ref{proofmain}), a complete proof has to deal with different kinds of $P_0$. The one solved here with $P_0=\delta_h$ is the simplest case. 
\end{remark}

\subsubsection{The random model.}

When carrying over the results of Proposition \ref{inv4} to the random model, we change the symbol $G$ to $I$, similar to the change from $\G$ to $\I.$  For instance, we set $I_j=G_0(\beta_{j+1}, \beta_{j+2},\cdots)$ for any $j\geq 0$. Then we have the following corollary.

\begin{corollary}\label{iergo}
The process $(I_j)=(I_j)_{j\geq 0}$ is stationary ergodic. 
Moreover $\P(\{I_j=0,\forall j\})=\P(I_0=0)\in\{0,1\}$.
\end{corollary}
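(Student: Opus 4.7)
The plan is to reduce both assertions to results already established in Proposition~\ref{inv4}, together with standard facts about i.i.d.\ sequences. The strategy splits cleanly into three short steps, and I do not anticipate a serious obstacle—essentially all the heavy lifting was done in Proposition~\ref{inv4}.

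For the stationary ergodicity, I would first observe that $G_0:[0,1)^\infty\to[0,1]$ is measurable. This is immediate from \eqref{k0pn}, which exhibits $G_0$ as the decreasing pointwise limit of $\prod_{l=1}^n h(1-b_l)/\int yP_l^n(dy)$; each term in the product is a measurable function of $(b_1,\ldots,b_n)$, because $P_l^n$ is built from the $b_l$'s by finitely many applications of the recursion \eqref{rev}. Since $(\beta_j)$ is i.i.d.\ and hence stationary ergodic, Lemma~\ref{ka} applied to $f=G_0$ then yields that $(I_j)=(G_0(\beta_{j+1},\beta_{j+2},\ldots))$ is stationary ergodic, exactly as in the proof of Corollary~\ref{ier}.

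For the identity $\P(\{I_j=0,\forall j\})=\P(I_0=0)$, I would invoke the final sentence of Proposition~\ref{inv4}: pointwise in $(b_l)$, the $G_j$'s are either all zero or all strictly positive. Evaluating this dichotomy along the shifted sequence $(\beta_{j+1},\beta_{j+2},\ldots)$ transfers it to $(I_j)$ pointwise in $\omega$, so the events $\{I_0=0\}$ and $\{I_j=0,\forall j\}$ coincide.

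Finally, for the dichotomy $\P(I_0=0)\in\{0,1\}$, I have two equivalent routes. One is to write $\{I_0=0\}=\bigcap_{j\geq 0}\{I_j=0\}$ and note that $\{I_j=0\}\in\sigma(\beta_{j+1},\beta_{j+2},\ldots)$, so the intersection lies in the tail $\sigma$-algebra of the i.i.d.\ sequence and Kolmogorov's $0$--$1$ law applies. The other is to observe that $\{I_j=0,\forall j\}$ is invariant under the shift on $(I_j)$, which is ergodic by the first step; either way the probability is in $\{0,1\}$. The only mild care required throughout is the measurability of $G_0$, which is transparent from \eqref{k0pn}.
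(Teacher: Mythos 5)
Your proposal is correct and essentially mirrors the paper's argument: measurability of $G_0$ plus Lemma~\ref{ka} gives stationary ergodicity of $(I_j)$, the pointwise dichotomy from \eqref{koj} identifies $\{I_0=0\}$ with $\{I_j=0,\forall j\}$, and shift-invariance together with ergodicity forces $\P(I_0=0)\in\{0,1\}$, exactly as in the paper. Your Kolmogorov $0$--$1$ alternative is a valid equivalent, but note that $\{I_j=0\}\in\sigma(\beta_{j+1},\beta_{j+2},\ldots)$ alone does not place the intersection $\bigcap_j\{I_j=0\}$ in the tail $\sigma$-algebra; you must use the coincidence $\{I_0=0\}=\{I_k=0\}$ for every $k$ (which you do establish via the dichotomy) to conclude that $\{I_0=0\}\in\sigma(\beta_{k+1},\beta_{k+2},\ldots)$ for all $k$.
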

\begin{remark}\label{eitheror}
If $Q(h)>0$, then it must be that $h=S_Q$ and $\I(h)=\I_0(h)>0$ a.s.. If $Q(h)=0,$ then $\I(h)=\I_0(h)=I_0.$ So applying Corollary \eqref{iergo}, either $\I(h)>0$ a.s., or  $\I(h)=0$ a.s..
\end{remark}
\begin{proof}
By Proposition \ref{inv4}, $G_0=G_0(b_1, b_2,\cdots)$ is a measurable function from 
$[0,1)^\infty$ to $[0,1]$.  As $(\beta_j)$ is i.i.d., we obtain that $(I_j)=(G_0(\beta_{j+1}, \beta_{j+2},\cdots))$ is stationary ergodic, thanks to Lemma \ref{ka}.

By (\ref{koj}), for any $k$, $\{I_k=0\}=\{I_j=0,\forall j\}$. Note that $\{I_j=0,\forall j\}$ is an invariant set in the sigma algebra generated by $(I_j)$. By ergodicity of $(I_j)$, $\P(\{I_j=0,\forall j\})=\P(I_0=0)\in\{0,1\}$. \qed
\end{proof}

The following result provides us a tool 
to know more about $\I$ and $Q$. Let $I=I_{0,h}$, and $I_Q=I_{0,S_Q}$.
To summarise, $\I, I, \I_Q, I_Q$ are identical in value to $\I_{0,h}, I_{0,h}, \I_{0,S_Q}, I_{0,S_Q}$ respectively. 

\begin{corollary}\label{pc}
The following statements about $\E\left[\ln\frac{1-\beta}{\int y\I(dy)}\right]$ hold:
\begin{enumerate}
\item[1).] $\E\left[\ln\frac{1-\beta}{\int y\I(dy)}\right]$ is well defined, and takes values in $[-\infty, -\ln \int yQ(dy)]$, and depends only on the marginal distributions of $\beta$ and $\I.$  

\item [2).] If $Q(h)=0$, then
$$ \E\left[\ln\frac{h(1-\beta)}{\int y\I(dy)}\right]\leq 0.$$

\item[3).] If $\I(h)>0$ a.s. and $Q(h)=0,$ %
then 
$$\E\left[\ln\frac{h(1-\beta)}{\int y\I(dy)}\right]=0.$$

\item[4).] If $h=S_Q$ and $Q(S_Q)>0$, then 
$$\E\left[\ln \frac{S_Q(1-\beta)}{\int y\I(dy)}\right]<0\text{ and } I=0, a.s..$$ 


\end{enumerate}
\end{corollary}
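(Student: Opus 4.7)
My plan is to work through the four parts in an order where each builds on the earlier ones, using the multiplicative identity \eqref{koj} and Birkhoff's ergodic theorem applied to the stationary ergodic sequence $(\beta_k,\I_k)$ as the principal tools. For part 1, I would first establish the pointwise lower bound $\int y\I_j(dy)\geq\int y Q(dy)$ a.s.: the recursion \eqref{if} combined with Cauchy--Schwarz (the size-biased mean dominates the mean) gives $\int y\I_{j-1}(dy)\geq(1-\beta_j)\int y\I_j(dy)+\beta_j\int y Q(dy)$, and iterating $N$ steps together with the telescoping identity $\sum_{k=1}^N\beta_{j+k}\prod_{l<k}(1-\beta_{j+l})=1-\prod_{k=1}^N(1-\beta_{j+k})\to 1$ a.s. produces the claim (the degenerate subcase $\beta\equiv 0$ a.s.\ makes every assertion in the corollary trivial). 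Consequently $(\ln\tfrac{1-\beta}{\int y\I})^+\leq-\ln\int y\I\leq-\ln\int y Q<\infty$, so the positive part is integrable, the expectation is well-defined in $[-\infty,-\ln\int y Q]$, and marginal dependence follows from $\beta=\beta_j$ being independent of $\I=\I_j=\G_0(\beta_{j+1},\ldots)$.

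For part 3, I would iterate \eqref{koj} into the pointwise identity $\ln I_0-\ln I_n=\sum_{k=1}^n\ln\tfrac{h(1-\beta_k)}{\int y\I_k(dy)}$. Under the hypothesis $I_0>0$ a.s., $\ln I_0$ is a.s.\ finite and $\ln I_n\stackrel{d}{=}\ln I_0$ is tight in distribution, so $\tfrac{1}{n}(\ln I_0-\ln I_n)\to 0$ in probability; Birkhoff (with ergodicity from Lemma \ref{ka}) together with well-definedness of $\mu$ from part 1 makes the right side divided by $n$ tend a.s.\ to $\mu$, and the two limits must agree, forcing $\mu=0$.

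For part 2, rather than splitting on condensation, I would argue uniformly via the forward sequence $(P_n)$ with $P_0=\delta_h$. Since $Q(h)=0$, the forward recursion collapses to $P_n(h)=\prod_{k=1}^nh(1-\beta_k)/\int y P_{k-1}(dy)\leq 1$, so $\tfrac{1}{n}\E[\ln P_n(h)]\leq 0$ for every $n$. Taking expectations and using $\beta_k\perp P_{k-1}$ yields $\tfrac{1}{n}\E[\ln P_n(h)]=\ln h+\E[\ln(1-\beta)]-\tfrac{1}{n}\sum_{k=1}^n\E[\ln\int y P_{k-1}(dy)]$. The key convergence $\E[\ln\int y P_n]\to\E[\ln\int y\I]$ follows from the distributional equality $\int y P_n\stackrel{d}{=}\int y P_0^n$ in \eqref{for=back}, the pointwise convergence $\int y P_0^n\to\int y\I$ in Lemma \ref{limitgf}, the uniform bound $\int y P_k\in[\int y Q,1]$ (same Cauchy--Schwarz iteration as in part 1), and dominated convergence. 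Cesaro then forces $\tfrac{1}{n}\E[\ln P_n(h)]\to\mu$, so $\mu\leq 0$.

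Part 4 is the most delicate and my main expected obstacle. The argument of part 3 in fact only uses \eqref{koj} and $I_0>0$, not $Q(h)=0$, so combined with the zero--one law of Corollary \ref{iergo}, $I>0$ a.s.\ would force $\mu=0$; hence it suffices to prove the strict inequality $\mu<0$, and $I=0$ a.s.\ then follows automatically. I would extract the strict inequality from the invariance $\int y\I\stackrel{d}{=}(1-\beta)s+\beta\int y Q$ with $s=\int y^2\I/\int y\I\leq S_Q$: the exceptional subcase $Q=\delta_{S_Q}$ gives $\I=\delta_{S_Q}$ and $\mu=\E[\ln(1-\beta)]<0$ directly, while for non-Dirac $Q$ one combines strict Cauchy--Schwarz (giving $s>\int y\I$ on a positive-probability event, since $\I$ being Dirac would force $Q$ Dirac via stationarity) with strict Jensen applied to $\ln((1-\beta)s+\beta\int y Q)$ to deduce $\E[\ln\int y\I]>\ln S_Q+\E[\ln(1-\beta)]$, i.e., $\mu<0$. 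The main hurdle is making strict Jensen rigorous through the implicit stationary definition of $\I$ and cleanly isolating the $\{0<\beta<1\}$ event where the strict inequality actually bites.
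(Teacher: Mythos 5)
Parts 1, 2 and 3 are correct, though the routes differ from the paper's. In part 1 you obtain $\int y\I\geq\int yQ$ by iterating Cauchy--Schwarz through the recursion (the same device the paper uses for Lemma \ref{enu}), whereas the paper reads it directly from the representation \eqref{bsfas} of $\G_0$ as a convex combination of $\{\delta_h,Q,Q^1,\dots\}$, which gives the sharper $Q\preceq\I\preceq\delta_h$. In part 2 you work with the forward sequence, $P_n(h)\le 1$ with $P_0=\delta_h$, and a Ces\`aro limit; the paper instead bounds $\E[P_0^n(h)]\le 1$, applies Jensen, and uses the monotone convergence in \eqref{preclong}. Both close. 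In part 3 iterating \eqref{koj} to get $\ln I_0-\ln I_n=\sum_{k=1}^n\ln\tfrac{h(1-\beta_k)}{\int y\I_k}$ is a clean substitute for the paper's use of \eqref{k0k}, and your tightness-plus-Birkhoff step is fine. Your observation that this only needs $I_0>0$ (not $Q(h)=0$), which you then exploit in part 4, is correct, and the reduction of part 4 to the single inequality $\mu<0$ via the zero--one law is sound.

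The gap is in part 4's proof of $\mu<0$, and it is a genuine one, not just a technicality to be polished. You propose to take logs of the mean-invariance $\int y\I\stackrel{d}{=}(1-\beta)s+\beta\int yQ$ with $s=\int y^2\I/\int y\I$ and invoke strict Jensen, but for the concave function $\ln$ Jensen delivers \emph{upper} bounds on $\E[\ln\int y\I]$, while you need a \emph{lower} bound beating $\ln S_Q+\E[\ln(1-\beta)]$. The estimates you have at hand all point the wrong way: $s\le S_Q$ and $\int yQ\le S_Q$ give $(1-\beta)s+\beta\int yQ\le S_Q$, and the Cauchy--Schwarz bound $s\ge\int y\I$ only produces the tautology $\E[\ln(1-\beta)]\le 0$. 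So the mean identity cannot by itself yield the strict inequality. (Also, the ``$\beta\equiv 0$ is trivial'' remark is not quite right for part 4: if $\LL=\delta_0$ one has $\I=\delta_{S_Q}$, $I=1$ and $\mu=0$, so part 4 is actually false there; the statement implicitly assumes $\P(\beta>0)>0$.)

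The repair is to apply the invariance \eqref{invnu} to the atom at $S_Q$ rather than to the mean. From $Q\preceq\I\preceq\delta_{S_Q}$ one has $\I(S_Q)\in[Q(S_Q),1]$, so $\ln\I(S_Q)$ is bounded and integrable. Reading off the mass at $S_Q$ in \eqref{invnu} gives
$$\I(S_Q)\ \stackrel{d}{=}\ (1-\beta)\,\frac{S_Q\,\I(S_Q)}{\int y\I(dy)}+\beta\,Q(S_Q),\qquad \beta\indep\I.$$
If $\E[\ln(1-\beta)]=-\infty$ then $\mu=-\infty$ trivially; otherwise, since $Q(S_Q)>0$ and $\P(\beta>0)>0$, the summand $\beta Q(S_Q)$ is strictly positive on a positive-probability event, and plain monotonicity of $\ln$ (no Jensen needed) gives
$$\E\!\left[\ln\I(S_Q)\right]\ >\ \E\!\left[\ln\!\left((1-\beta)\,\frac{S_Q\,\I(S_Q)}{\int y\I(dy)}\right)\right]=\E[\ln(1-\beta)]+\ln S_Q+\E[\ln\I(S_Q)]-\E\!\left[\ln\!\int y\I(dy)\right],$$
and cancelling $\E[\ln\I(S_Q)]$ yields $\mu<0$. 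For what it is worth, the paper's own treatment of this step (``using \eqref{1/n} and the arguments thereafter'') is also terse: that argument, applied verbatim with $I_0=0$, only gives $\mu\le 0$, so the atom-at-$S_Q$ identity above is the cleanest way to close both proofs.
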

\begin{remark}\label{e<=0}
If $h=S_Q$, we can only have $\E\left[\frac{S_Q(1-\beta)}{\int y\I(dy)}\right]=\E\left[\frac{S_Q(1-\beta)}{\int y\I_Q(dy)}\right]\leq 0.$
\end{remark}
\begin{proof}

1). By (\ref{bsfas}), $\G_0$ is a convex combination of probability measures
$\{\delta_h, Q, Q^1,Q^2, \cdots\}$. As $Q^j\preceq Q^{j+1}\preceq \delta_h$ for any $j\geq 0$,  we have, in the pointwise sense
\begin{equation}\label{3}
Q\preceq \I=\I_0\preceq \delta_h. 
\end{equation}
Then
$$\ln \int yQ(dy)\le \E\left[\ln\int y\I(dy)\right]\leq \ln h.$$
So $\E\left[\ln\int y\I(dy)\right]$ is a finite term. Consequently,
\begin{align*}
\E\left[\ln \frac{1-\beta}{\int y\I(dy)}\right]&=\E\left[\ln (1-\beta)-\ln \int y\I(dy)\right]&\\
&=\E\left[\ln (1-\beta)\right]-\E\left[\ln \int y\I(dy)\right] \in\left[-\infty, -\ln \int yQ(dy)\right].&\end{align*}
We observe that the above display depends only on the marginal distributions of $\beta$ and $\I.$

2).  Let $(P_j^n)$ be the finite backward sequence in the random model with 
$P_n^n=\delta_h$. By assumption, $Q(h)=0$. Adapting (\ref{devback}) into the random model and taking the expectation of the mass on $h$ we obtain
$$1\geq \E[P_0^{n}(h)]=\E\left[\left(\prod_{l=1}^{n}\frac{h(1-\beta_{l})}{\int yP_l^n(dy)}\right)\right]\geq \exp\left(\sum_{l=1}^{n}\E\left[\ln\frac{h(1-\beta_{l})}{\int yP_l^n(dy)}\right]\right)$$
where the second inequality is due to Jensen's inequality. By (\ref{preclong})
$$\E\left[\ln\frac{h(1-\beta_{l})}{\int yP_l^n(dy)}\right] \text{ increases in $n$ and converges to  }\E\left[\ln\frac{h(1-\beta_{l})}{\int y\I_l(dy)}\right]=\E\left[\ln\frac{h(1-\beta)}{\int y\I(dy)}\right]. $$
Combining the above two displays, it must be that $\E[\ln\frac{h(1-\beta)}{\int y\I(dy)}]\leq 0.$ 

3). Lemma \ref{limitgf} implies that there exists a measurable function $T:[0,1)^\infty\mapsto (0,\infty)$ such that for any $j$, $\frac{h(1-b_j)}{\int y\G_{j}(dy)}=T(b_j, b_{j+1},\cdots)$. By Lemma \ref{ka}, 
$$\left(\frac{h(1-\beta_j)}{\int y\I_{j}(dy)}\right) \text{ is stationary ergodic.}$$
By $ (\ref{k0k})$ and the fact that $\I(h)=\I_0(h)=I_0>0$ a.s.\ (because $Q(h)=0$ by assumption), 
\begin{equation}\label{1/n}\lim_{n\to\infty}(I_0)^{1/n}=\lim_{n\to\infty}\exp\left(\frac{1}{n}\ln \int\left(\frac{y}{h}\right)^n\I_n(dy)+\frac{1}{n}\sum_{l=1}^n\ln\frac{h(1-\beta_l)}{\int y\I_l(dy)}\right)\stackrel{a.s.}{=}1.\end{equation}
As $(\I_j)$ is stationary ergodic, $\int\left(\frac{y}{h}\right)^n\I_{n}(dy)\in [I_n,1]$ converges weakly to $I_0, $ which is strictly positive. Then
$$\left[\frac{1}{n}\ln I_{n}, 0\right]\ni \frac{1}{n}\ln \int \left(\frac{y}{h}\right)^n\I_n(dy)\stackrel{d}{\longrightarrow} 0, \quad n\to\infty.$$
Moreover,  since $\left(\frac{h(1-\beta_j)}{\int y\I_{j}(dy)}\right)$ is stationary ergodic, we have 
$$\frac{1}{n}\sum_{l=1}^n\ln\frac{h(1-\beta_l)}{\int y\I_l(dy)}\stackrel{a.s.}{\longrightarrow} \E\left[\ln \frac{h(1-\beta)}{\int y\I(dy)}\right], \quad n\to\infty.$$
The above three displays yield
$$1=\exp\left(\E\left[\ln \frac{h(1-\beta)}{\int y\I(dy)}\right]\right)\,\,\text{ or equivalently }\,\,\E\left[\ln \frac{h(1-\beta)}{\int y\I(dy)}\right]=0.$$

4). We show $I=I_0=0$ a.s.\ by contradiction. Adapting (\ref{devback})
in the random model
$$P_0^n(dx)=\left(\prod_{l=1}^n\frac{S_Q(1-\beta_l)}{\int yP_l^n(dy)}\right)\delta_{S_Q}(dx)+\sum_{j=0}^{n-1}\left(\prod_{l=1}^j\frac{1-\beta_l}{\int yP_l^n(dy)}\right)\beta_{j+1}m_jQ^j(dx).$$
If $I_0>0$ a.s., we consider the mass on $S_Q$ in the above display.
Note that $m_jQ^j(S_Q)=S_Q^jQ(S_Q)$. Together with (\ref{k0pn}) we obtain
$$1\geq P_0^n(S_Q)\geq Q(S_Q)\sum_{j=0}^{n-1}\left(\prod_{l=1}^j\frac{S_Q(1-\beta_l)}{\int yP_l^n(dy)}\right)\beta_{j+1}\geq Q(S_Q)\sum_{j=0}^{n-1}I_0\beta_{j+1}\stackrel{d}{\longrightarrow}\infty.$$
This is a contradiction. So $I_0=0, a.s..$ Note that by (\ref{3}), $\I(S_Q)=\I_0(S_Q)\geq Q(S_Q)>0.$ Then we get $\E\left[\ln \frac{h(1-\beta)}{\int y\I(dy)}\right]<0$ using (\ref{1/n}) and the arguments thereafter. \qed



\end{proof}

\subsection{Proof of Theorem \ref{critical}}\label{proofcritical}

\begin{proof}[Proof of Theorem \ref{critical}]

The statement about $\E\left[\ln \frac{h(1-\beta)}{\int y\I_Q(dy)}\right]$ concerns just a subcase of Corollary \ref{pc}--1). So this is proved. 

If there is no condensation at $h$, then by Corollary \ref{kk'}, $\I= \I_{0,h}=\I_{0, S_Q}=\I_Q$, then of course $\I\stackrel{d}{=}\I_Q.$

The first assertion in the condensation criterion holds due to Corollary \ref{pc}--3). We consider the second one. If there is condensation at $h$, then $ \I_{0, S_Q}\neq \I_{0,h}$. By Corollary \ref{kk'}, $\I_{0,h}\leq_{S_Q-}\I_{0, S_Q}$ and $\I_{0,S_Q}(S_Q)\leq \I_{0,h}(h)$, which, together with Corollary \ref{pc}--3) entails that
$$\E\left[\ln \frac{h(1-\beta)}{\int y\I_{0,S_Q}(dy)}\right]>\E\left[\ln \frac{h(1-\beta)}{\int y\I_{0,h}(dy)}\right]=0.$$ 
The above inequality is strict because $ \I_{0, S_Q}\neq \I_{0,h}$. 

If there is no condensation at $h$, then by Corollary \ref{kk'}, $\I_{0,h}=\I_{0,S_Q}$. Using Corollary \ref{pc}--2), $$\E\left[\ln \frac{h(1-\beta)}{\int y\I_{0,S_Q}(dy)}\right]=\E\left[\ln \frac{h(1-\beta)}{\int y\I_{0,h}(dy)}\right]\leq 0.$$\qed
\end{proof}

\subsection{Some properties of invariant measures}\label{someinv}
In this section, we prove some results concerning invariant measures. But we leave the proof of Theorem \ref{com} to the end. Invariant measures will play important roles in the proof of Theorem \ref{main}. 

\begin{lemma}\label{enu}
For any invariant measure $\nu$, $\E\left[\ln\frac{1-\beta}{\int y\nu(dy)}\right]$ is well defined, and takes values in 
$[-\infty, -\ln\int yQ(dy)]$, and depends only on the marginal distributions of $\beta$ and $\nu.$ 
\end{lemma}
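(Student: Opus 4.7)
By the independence of $\beta$ and $\nu$ built into the definition of invariance, the two scalar random variables $\ln(1-\beta)$ and $\ln \int y\nu(dy)$ are independent, and (once well-definedness is settled) we may write
\[\E\left[\ln\frac{1-\beta}{\int y\nu(dy)}\right] = \E[\ln(1-\beta)] - \E\left[\ln\int y\nu(dy)\right].\]
Each summand is a functional of a single marginal, so the ``depends only on the marginals'' claim is automatic. Since $1-\beta\leq 1$ and $\int y\nu(dy)\leq 1$, both $\E[\ln(1-\beta)]$ and $\E[\ln\int y\nu(dy)]$ take values in $[-\infty,0]$; the crux is to show that the second expectation is actually finite, bounded below by $\ln \int yQ(dy)>-\infty$ (recall $Q\neq\delta_0$).

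To get the lower bound, I integrate $y\cdot$ against both sides of the invariance equation \eqref{invnu}, producing the scalar distributional identity
\[\int y\nu(dy) \stackrel{d}{=} (1-\beta)\frac{\int y^2\nu(dy)}{\int y\nu(dy)} + \beta\int yQ(dy).\]
Apply the weighted AM--GM inequality $(1-\beta)A+\beta B\geq A^{1-\beta}B^{\beta}$, take logarithms and then expectations, and split products via the independence of $\beta$ from $\nu$. Combine this with the Jensen inequality $\int y^2\nu(dy)\geq (\int y\nu(dy))^2$, which yields $\ln(\int y^2\nu(dy)/\int y\nu(dy))\geq \ln\int y\nu(dy)$. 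Substituting, one arrives at
\[\E\left[\ln\int y\nu(dy)\right]\geq (1-\E[\beta])\,\E\left[\ln\int y\nu(dy)\right] + \E[\beta]\ln\int yQ(dy),\]
which (on dividing by $\E[\beta]$) rearranges to $\E[\ln\int y\nu(dy)]\geq \ln\int yQ(dy)$.

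Collecting the pieces: $\E[\ln\int y\nu(dy)] \in [\ln \int yQ(dy),\,0]$ is finite, so the difference $\E[\ln(1-\beta)]-\E[\ln\int y\nu(dy)]$ is unambiguously defined in $[-\infty,-\ln\int yQ(dy)]$, attaining $-\infty$ precisely when $\E[\ln(1-\beta)]=-\infty$. The main delicate step is the AM--GM/Jensen chain, where the independence of $\beta$ and $\nu$ is essential for factoring $\E[(1-\beta)\ln(\int y^2\nu(dy)/\int y\nu(dy))]$ into $(1-\E[\beta])\E[\ln(\cdots)]$. A minor caveat is the degenerate case $\E[\beta]=0$ (where the division step is vacuous); but then $\beta\equiv 0$ and the invariance equation reduces to pure size-biasing, forcing $\nu$ to be a point mass, and the statement should be checked separately or excluded under the implicit non-degeneracy assumption on $\LL$.
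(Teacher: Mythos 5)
Your plan is in the same spirit as the paper's: both extract a scalar inequality by integrating $y$ against the invariance equation \eqref{invnu}, both invoke the Cauchy--Schwarz inequality $\int y^2\nu(dy)\geq\bigl(\int y\nu(dy)\bigr)^2$, and both conclude by decomposing the expectation using the independence of $\beta$ and $\nu$. The execution differs in one step. The paper takes the plain expectation of $\int y\nu(dy)$, obtains $\int yQ(dy)\leq\E\bigl[\int y\nu(dy)\bigr]\leq 1$, and then says ``proceeding similarly as in the proof of Corollary~\ref{pc}--1)''; you instead pass to logarithms via the weighted AM--GM inequality and arrive directly at $\E\bigl[\ln\int y\nu(dy)\bigr]\geq\ln\int yQ(dy)$. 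Your route is the more careful one for this lemma: Jensen runs the wrong way ($\E[\ln X]\leq\ln\E[X]$), so the paper's displayed bound on $\E\bigl[\int y\nu(dy)\bigr]$ does not by itself transfer to the needed bound on $\E\bigl[\ln\int y\nu(dy)\bigr]$; in Corollary~\ref{pc}--1) that step rested on the pointwise domination $Q\preceq\I$, which is not available for a general invariant $\nu$. Your AM--GM step supplies exactly the ingredient that the paper leaves implicit.

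That said, you should flag one more subtlety alongside your $\E[\beta]=0$ caveat. Writing $m=\int y\nu(dy)$, the rearrangement of $\E[\ln m]\geq(1-\E[\beta])\E[\ln m]+\E[\beta]\ln\int yQ(dy)$ into $\E[\ln m]\geq\ln\int yQ(dy)$ presupposes $\E[\ln m]>-\infty$; if $\E[\ln m]=-\infty$, then (since $1-\E[\beta]>0$) both sides equal $-\infty$ and the chain is vacuous, so it does not on its own rule out $\E[\ln m]=-\infty$. But that finiteness is precisely what the well-definedness claim needs, since $-\ln m$ dominates the positive part of $\ln\frac{1-\beta}{m}$. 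As written your argument is therefore circular at this point; this is a defect shared with the paper's own sketch, but you should either supply a separate a~priori argument for $\E[\ln m]>-\infty$ or state the assumption explicitly rather than let the subtraction absorb it silently.
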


\begin{proof}
By the definition of invariant measure 
\begin{align*}\E\left[\int y\nu(dy)\right]&=(1-\E[\beta])\E\left[\frac{\int y^2\nu(dy)}{\int y\nu(dy)}\right]+\E[\beta]\int yQ(dy)&\\
&\geq (1-\E[\beta])\E\left[\int y\nu(dy)\right]+\E[\beta]\int yQ(dy)&\end{align*}
where the inequality is due to the fact that $\int y^2\nu(dy)\geq (\int y\nu(dy))^2.$ Then we obtain
$$\int yQ(dy)\leq \E\left[\int y\nu(dy)\right]\leq 1.$$
Proceeding similarly as in the proof of Corollary \ref{pc}--1), we conclude that this lemma holds. \qed
\end{proof}

\begin{corollary}\label{uq}
$\I_Q$ is the unique (in distribution) invariant measure supported on $[0,S_Q]$.
\end{corollary}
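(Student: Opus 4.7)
The plan is to show that for any invariant $\nu$ supported on $[0,S_Q]$, the finite backward sequence $(P_j^n)_{0\le j\le n}$ with $P_n^n=\nu$ satisfies $P_0^n\stackrel{d}{\longrightarrow}\I_Q$; since an easy induction will give $P_0^n\stackrel{d}{=}\nu$, this would prove $\nu\stackrel{d}{=}\I_Q$.

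First I would check that $S_\nu=S_Q$: iterating \eqref{invnu} once shows $\supp(\nu)\supseteq\supp(Q)$, so the inequality $S_\nu\le S_Q$ given by $\supp(\nu)\subseteq[0,S_Q]$ is in fact an equality. I would then take $\nu$ independent of $(\beta_j)$ and define $(P_j^n)$ as the backward sequence from the general model. Iterating \eqref{invnu} from $j=n$ downward, using that $\beta_{j+1}\indep P_{j+1}^n$ at each step, gives $P_j^n\stackrel{d}{=}\nu$ for every $j\le n$. Next I would couple with the reference backward sequence $(\tilde P_j^n)$ defined by $\tilde P_n^n=\delta_{S_Q}$ and the same $(\beta_j)$. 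The monotonicity analysis used to establish \eqref{pjn} applies verbatim starting from $\delta_{S_Q}\le_{S_Q-}\nu$, yielding $\tilde P_j^n\le_{S_Q-}P_j^n$ pointwise a.s.\ for every $j$; meanwhile Proposition \ref{inv4}, applied in the random model, gives $\tilde P_0^n\to\I_Q$ a.s.\ in total variation.

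Because both $P_0^n$ and $\tilde P_0^n$ are probability measures supported on $[0,S_Q]$, the ordering $\tilde P_0^n\le_{S_Q-}P_0^n$ reduces their total variation distance to $\tilde P_0^n(\{S_Q\})-P_0^n(\{S_Q\})$. Once this is shown to vanish in probability, the distributional limit of $P_0^n$ will coincide with that of $\tilde P_0^n$, namely $\I_Q$, and combined with $P_0^n\stackrel{d}{=}\nu$ one obtains $\nu\stackrel{d}{=}\I_Q$.

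The hard part will be proving $P_0^n(\{S_Q\})\to I_Q$ in probability. Isolating the atom at $S_Q$ in the expansion \eqref{devback1} for $P_0^n$ reduces matters to controlling $\prod_{l=1}^n \frac{S_Q(1-\beta_l)}{\int yP_l^n(dy)}$. Since $(P_j^n)_{0\le j\le n}$ is the time reversal of the forward Markov chain started at the invariant distribution $\nu$, I would embed it, via Kolmogorov extension, into a two-sided stationary sequence $(\nu_j,\beta_j)_{j\in\Z}$ and apply Birkhoff's ergodic theorem to the function $\log(S_Q(1-\beta)/\int y\nu)$. On the shift-invariant event $\{\nu(\{S_Q\})>0\}$ the stationarity and tightness of $\log\nu_j(\{S_Q\})$ will force the Birkhoff limit to be zero---mirroring the argument in Corollary \ref{pc}--3)---so the log-product is $o(n)$ and matches the asymptotic rate of $\tilde P_0^n(\{S_Q\})$; on the complementary event the atom at $S_Q$ is identically absent, so both terms vanish trivially. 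Combining the two cases will yield the desired convergence and close the argument.
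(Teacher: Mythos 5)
Your setup (the backward sequences $P_j^n$, the coupling $\tilde P_j^n\le_{S_Q-}P_j^n$ from the monotonicity analysis, and reducing the TV distance to the single atom at $S_Q$) is the time-reversed analogue of the paper's forward argument, which uses the decomposition $P_n=\M_n+\W_n$; via \eqref{for=back} the two viewpoints are equivalent, so this part is fine. The divergence, and the trouble, is in the hard case, where your argument has two genuine gaps.

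First, the case split on $\{\nu(S_Q)=0\}$ versus $\{\nu(S_Q)>0\}$ is mishandled. In the nontrivial subcase $I_Q>0$ a.s.\ (which forces $Q(S_Q)=0$ and $\I_Q(S_Q)=I_Q>0$ a.s.), the reference atom $\tilde P_0^n(S_Q)\to I_Q$ a.s.\ is built from $\tilde P_n^n=\delta_{S_Q}$ and does not depend on $\nu$ at all. So on $\{\nu(S_Q)=0\}$ one has $P_0^n(S_Q)\equiv 0$ while $\tilde P_0^n(S_Q)\to I_Q>0$; the TV distance tends to $I_Q$ there, not to $0$. Your claim that on this event ``both terms vanish trivially'' is simply false. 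Showing $\P(\nu(S_Q)=0)=0$ is part of the conclusion $\nu\stackrel{d}{=}\I_Q$, and your Birkhoff analysis, which is confined to $\{\nu(S_Q)>0\}$, gives no handle on the complementary event.

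Second, even where it applies, the Birkhoff/tightness step only yields $\frac{1}{n}\sum_{l=1}^n\ln\frac{S_Q(1-\beta_l)}{\int yP_l^n(dy)}\to 0$ a.s., i.e.\ the product is $e^{o(n)}$. This is far too weak to give $\tilde P_0^n(S_Q)-P_0^n(S_Q)\to 0$: two positive sequences can both be $e^{o(n)}$ while their difference stays bounded away from zero, so ``matching the asymptotic rate'' of the reference product settles nothing. What is needed is the actual limit of the product, not its exponential rate. The paper closes this hole by a different device: from the coupling it gets $\I_Q\le_{S_Q-}^d\nu$ and $\nu(S_Q)\preceq I_Q$; assuming $\nu\not\stackrel{d}{=}\I_Q$ it deduces the \emph{strict} inequality $\E\bigl[\ln\frac{S_Q(1-\beta)}{\int y\nu(dy)}\bigr]>0$ (using Corollary \ref{pc}--3) for $\I_Q$), then picks $\epsilon>0$ small and uses Jensen's inequality to show $\E\bigl[\int\M_n(dx)\bigr]\to\infty$, contradicting $\E\bigl[\int\M_n(dx)\bigr]\le 1$. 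That growth-rate contradiction is the idea missing from your plan, and it handles both events at once without needing to isolate $\{\nu(S_Q)=0\}$.
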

\begin{proof}
Let $\nu$ be any invariant measure on $[0,S_Q]$. We show that $\nu\stackrel{d}{=}\I_Q$. Note that $S_\nu=S_Q$, a.s..
Let $(P_n)$ and $(P_n')$ be two forward sequences as in Section \ref{three} with 
$$Q=Q'; \quad h=h'=S_Q; \quad P_0\stackrel{d}{=}\nu, \quad P_0'=\delta_{S_Q}$$
and $P_0$ is independent of $(\beta_n)$.  The two sequences differ only in the starting measures (satisfying $P_0\leq_{S_Q-}P_0'$), with other parameters identical. Since $\nu$ is invariant, $P_n\stackrel{d}{=}\nu$ for any $n\geq0$. 
Using the notations $\M_n,\M_n',\W_n,\W_n'$ in Section \ref{three}, and by the monotonicity analysis as in the proof of Lemma \ref{limitgf}, we obtain  in the pointwise sense,
\begin{equation}\label{compare}\int \M_n(dx)\leq \int \M_n'(dx), \quad \W_n'\leq_{S_Q} \W_n.\end{equation}

If $I_Q=0$ a.s., by \eqref{k0pn} in Proposition \ref{inv4} and \eqref{for=back}
$$\int \M_n'(dx)\stackrel{d}{\longrightarrow}I_Q\stackrel{a.s.}{=}0.$$
Remark \ref{convergence} says that $P_n'(=\W_n'+\M_n')\stackrel{d}{\to}\I_Q.$ Then 
$$\W'_n{\stackrel{d}{\longrightarrow} }\I_Q.$$
Thus applying (\ref{compare}) and $I_Q=0,$ a.s., we obtain 
$$\int \M_n(dx)\stackrel{d}{\longrightarrow}0, \quad \W_n{\stackrel{d}{\longrightarrow} }\I_Q.$$
Consequently, 
$$P_n(=\W_n+\M_n)\stackrel{d}{\longrightarrow}\I_Q.$$
Since $\nu\stackrel{d}{=}P_n$ for any $n$, we have $\nu\stackrel{d}{=}\I_Q.$

If $I_Q>0, a.s.,$ then by Corollary \ref{pc}--4), $Q(S_Q)=0$ and $\I_Q(S_Q)=I_Q>0, a.s.$. Then by Corollary \ref{pc}--3), we have 
\begin{equation}\label{qq}\E\left[\ln\frac{S_Q(1-\beta)}{\int y\I_Q(dy)}\right]=0.\end{equation}
Using again monotonicity analysis, in a pointwise sense,
\begin{equation}\label{monoana}P_n'\leq_{S_Q-}P_n, \quad P_n(S_Q)\leq P_n'(S_Q).\end{equation}
As $P_n'\stackrel{d}{\rightarrow}\I_Q$,  and $P_n\stackrel{d}{=}\nu$ for all $n$, the above display entails that,
$$\I_Q\leq_{S_Q-}^d\nu, \quad \nu(S_Q)\preceq I_Q=\I_{Q}(S_Q).$$
 Assume that $\nu$ is not equal to $\I_Q$ in distribution, then by the above display and \eqref{qq}, 
$$\E\left[\ln\frac{S_Q(1-\beta)}{\int y\nu(dy)}\right]>0.$$
The inequality implies that for $\epsilon>0$ small enough, we have
$$\E\left[\ln\frac{(S_Q-\epsilon)(1-\beta)}{\int y\nu(dy)}\right]>0.$$
As $S_\nu=S_Q$ almost surely and $P_0\stackrel{d}{=}\nu$, 
$\int (\frac{x}{S_Q-\epsilon})^nP_0(dx)\stackrel{d}{\rightarrow}\infty,$ as $n\to\infty$. 
Using again the decomposition (\ref{hg}) in Section \ref{three}, we get
\begin{align*}1&=\E\left[\int P_0(dx) \right]\geq \E\left[\int \M_n(dx)\right]&\\
&=\E\left[\exp\left(\ln\int \left(\frac{x}{S_Q-\epsilon}\right)^nP_0(dx)+\sum_{l=0}^{n-1}\ln\frac{(S_Q-\epsilon)(1-\beta_{l+1})}{\int yP_l^n(dy)}\right)\right]&\\
&\geq \E\left[\exp\left(\sum_{l=0}^{n-1}\ln\frac{(S_Q-\epsilon)(1-\beta_{l+1})}{\int yP_l^n(dy)}\right)\right]\geq \exp\left(n\E\left[\frac{(S_Q-\epsilon)(1-\beta)}{\int y\nu(dy)}\right] \right)\stackrel{n\to\infty}{\longrightarrow}\infty&\end{align*}
where the third inequality is due to Jensen's inequality. So this is a contradiction, which means that $\nu$ is equal in distribution to $\I_Q$. \qed
\end{proof}

\subsection{Proof of Theorem \ref{main}}\label{proofmain}
\noindent{{\bf Case  1. ${P_0=\delta_h}$.}}
\begin{proof}[Proof of Theorem \ref{main}]
This is shown in Remark \ref{convergence}.\qed

\end{proof}
\noindent{{\bf Case  2. ${I_{0,h}=0}$ a.s..}}
\begin{proof}[Proof of Theorem \ref{main}]
Let $(P_n)_{n\geq 0}$,  $(P'_n)_{n\geq 0}$ be two forward sequences in Section \ref{three}
with
$$Q=Q', \quad h=h',\quad  P_0'=\delta_h.$$
So the two sequences differ only in the starting measures (satisfying $P_0\leq_{h-}P_0'$), with other parameters identical. Next it suffices to follow the same procedure as in the proof of Corollary \ref{uq} for the case $I_Q=0$ a.s.. The proof is omitted. \qed\end{proof}

\noindent{{\bf Case  3. ${I_{0,h}>0 }$ a.s. and $P_0(h)>0.$}}

First of all, we restate a result from (\cite{Y15}, p.10), where only $h=1$ is considered. But it is easily generalised to any $h$. 
Recall also the distribution function $D_{u}$ for $u\in M_1$,  introduced in Section \ref{relameas}.

\begin{lemma}\label{ca}
Let $u_1,u_2\in M_1$ be any probability measures satisfying $S_{u_1}=S_{u_2}=h$ and $u_1\leq_{h-}u_2$. If for some $\epsilon>0$ there exists $a\in(0,h)$ such that $D_{u_1}(a)+\epsilon\leq D_{u_2}(a)$, then $$\int yu_1(dy)\geq c(a,\epsilon)\int yu_2(dy)$$ where $c(a,\epsilon)=\frac{1}{1-\epsilon(h-a)}>1.$
\end{lemma}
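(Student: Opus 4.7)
The plan is to exploit the fact that $u_1 \leq_{h-} u_2$ together with $S_{u_1}=S_{u_2}=h$ forces $u_1$ to have at least as much mass at $\{h\}$ as $u_2$, and then to track how the hypothesis $D_{u_2}(a)-D_{u_1}(a)\geq\epsilon$ quantifies that excess. Specifically, since both are probability measures and $u_1([0,h))\leq u_2([0,h))$, I would first observe that $u_1(\{h\})-u_2(\{h\}) = u_2([0,h))-u_1([0,h)) \geq u_2([0,a])-u_1([0,a]) \geq \epsilon$, using $u_2((a,h))\geq u_1((a,h))$ from the $\leq_{h-}$ relation.

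Next I would estimate the signed integral $\int y\,(u_1-u_2)(dy)$ by decomposing $[0,1]$ as $[0,a]\cup(a,h)\cup\{h\}$. On $[0,a]$ the density $y$ is bounded above by $a$ and $(u_1-u_2)(dy)$ is a nonpositive measure, so $\int_{[0,a]} y\,(u_1-u_2)(dy)\geq a\,(u_1-u_2)([0,a])$; on $(a,h)$, likewise $y\leq h$ and $\int_{(a,h)} y\,(u_1-u_2)(dy) \geq h\,(u_1-u_2)((a,h))$; on $\{h\}$ we get exactly $h\,(u_1-u_2)(\{h\})$. Using the mass-balance identity $(u_1-u_2)(\{h\}) + (u_1-u_2)((a,h)) = -(u_1-u_2)([0,a])$, all three terms combine neatly into
\begin{equation*}
\int y\,u_1(dy) - \int y\,u_2(dy) \;\geq\; (h-a)\bigl(u_2-u_1\bigr)([0,a]) \;\geq\; (h-a)\epsilon.
\end{equation*}

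Finally, since $u_1$ is a probability measure on $[0,1]$ we have $\int y\,u_1(dy)\leq 1$, hence $(h-a)\epsilon \geq \epsilon(h-a)\int y\,u_1(dy)$. Plugging this into the previous inequality and rearranging gives $(1-\epsilon(h-a))\int y\,u_1(dy) \geq \int y\,u_2(dy)$, which is exactly the claim with constant $c(a,\epsilon)=1/(1-\epsilon(h-a))$; note $c(a,\epsilon)>1$ holds automatically as $\epsilon(h-a)\in(0,1)$ under the nondegenerate hypotheses. The only subtle point is signs when integrating $y$ against the nonpositive measure $u_1-u_2$ on $[0,h)$, and the trick of bounding $y$ above (rather than below) on each piece, which reverses the inequality in the right direction.
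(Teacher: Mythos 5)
Your proof is correct. The paper does not reproduce a proof here — it cites the lemma from the earlier work \cite{Y15} (where it is stated for $h=1$) and notes the extension is routine — so there is no in-text argument to compare against, but your route is exactly the natural one and, as far as I can tell, matches the argument in the cited source: transfer the mass deficit $u_2([0,a])-u_1([0,a])\ge\epsilon$ through the relation $\leq_{h-}$ to an excess of $u_1$ at $\{h\}$, bound $\int y\,(u_1-u_2)(dy)$ below by decomposing $[0,h]$ into $[0,a]\cup(a,h)\cup\{h\}$ and using the mass-balance identity, obtaining $\int yu_1-\int yu_2\ge\epsilon(h-a)$, and then convert the additive gap to a multiplicative one via $\int yu_1\le 1$. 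One small point worth making explicit: the final rearrangement requires $1-\epsilon(h-a)>0$; this follows either from $\int y\,u_2(dy)>0$ (since $S_{u_2}=h>0$) once the inequality $\int yu_1\,[1-\epsilon(h-a)]\ge\int yu_2$ is in hand, or directly from $\epsilon\le u_2([0,a])-u_1([0,a])<1$ (the strict inequality because $u_2([0,a])<1$ given $a<h=S_{u_2}$) together with $h-a\le 1$.
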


\begin{proof}[{Proof of Theorem \ref{main}}]
Let $(P_n), (P_n')$ be the two forward sequences in the proof of Case 2. Similarly as (\ref{monoana}), conditionally on $(\beta_n)$, we have
\begin{equation}\label{monoana3}P_n'\leq_{h-}P_n,\quad P_n(h)\leq  P_n'(h),\end{equation}
implying
$$\int yP_j'(dy)\geq \int yP_j(dy),\quad \forall j\geq 0.$$
For any $\epsilon>0, a\in(0,h)$, let 
$$\kappa_n:=\#\{n: D_{P'_j}(a)+\epsilon\leq D_{P_j}(a), 0\leq j\leq n\}.$$
Note that by Proposition \ref{inv4}--4), $ Q(h)=0$. So using (\ref{hg}) and (\ref{hg'}) in Section \ref{three}
$$P'_{n}(h)=\prod_{l=0}^{n-1}\frac{h(1-\beta_{l+1})}{\int yP'_l(dy)}, \quad 
P_{n}(h)=\left(\prod_{l=0}^{n-1}\frac{h(1-\beta_{l+1})}{\int yP_l(dy)}\right)P_0(h).$$
Then by Lemma \ref{ca}, we have $\prod_{l=0}^n\int yP_l'(dy)\geq c(a,\epsilon)^{\kappa_n}\prod_{l=0}^n\int yP_l(dy).$
Therefore
$$P'_{n}(h)\leq  \frac{1}{c(a,\epsilon)^{\kappa_n}}\left(\prod_{l=0}^{n-1}\frac{h(1-\beta_{l+1})}{\int yP_l(dy)}\right) = \frac{P_n(h)}{c(a,\epsilon)^{\kappa_n}P_0(h)}\leq  \frac{1}{c(a,\epsilon)^{\kappa_n}P_0(h)}.$$
But (\ref{k0pn}) of Proposition \ref{inv4} and (\ref{for=back}) entail that $P'_{n}(h)$ converges weakly to $I_{0,h}$ which is by assumption non-zero almost surely. Then $\lim_{n\to\infty}\kappa_n<\infty$ a.s.. As $a, \epsilon$ are arbitrary numbers and by Case 1 of  this theorem $P_n'\stackrel{d}{\longrightarrow}\I_{0,h}$, we use (\ref{monoana3}) to conclude that $P_n$ also  converges weakly to $\I_{0,h}.$ \qed
\end{proof}

\noindent{{\bf Case 4. ${I_{0,h}>0 }$ a.s. and $P_0(h)=0.$}}
\begin{proof}[Proof of Theorem \ref{main}]
The idea is to use a tripling argument similarly as in the proof of Theorem 5 in \cite{Y15}. 
For any $u\in M_1$ and any $a\in [0,1]$, define 
$$u^a=u_{[0,a)}+u([a,1])\delta_a, \quad a<h$$ 
where $u_{[0,a)}$ is the restriction of $u$ on $[0,a).$ 

We distinguish between $h>S_Q$ and $h=S_Q.$ For the former, let $(P_n), (P_n'), (P_n'')$ be three forward sequences in Section \ref{three} with 
$$S_Q<h''<h=h'; \quad Q'=Q,\quad  Q''=Q^{h''}=Q;\quad  P_0'=\delta_h,\quad  P_0''=P_0^{h''}.$$
So the three sequences differ in the starting measures including the largest fitness values, but with the same $Q$ and $(\beta_n)$. Since  $P_0'(h')=\delta_h(h)=1$ and $0<P_0''(h'')\leq 1$ and, we use Case 1 for $(P_n')$ and Case 2, Case 3 for $(P_n'')$ to obtain that, 
\begin{equation}\label{13}P_n'\stackrel{d}{\longrightarrow}\I_{0,h},\quad P_n''\stackrel{d}{\longrightarrow}\I_{0,h''}.\end{equation}
Applying the monotonicity analysis, the following holds in the pointwise sense: 
\begin{equation}\label{ppp'''}P'_n\leq_{h-} P_n\leq_{h''-} P''_n.\end{equation}

 Letting $h''\to h$ and using Corollary \ref{kk'}, conditionally on $(\beta_j)$, $\I_{0,h''}$ converges weakly to a limit in $M_1$, denoted by $\nu$. 
So $\nu$ is a (pointwise) weak limit of $\I_{0,h''}$ as $h''\to h.$ 
We prove next that $\nu=\I_{0,h}.$

Since $I_{0,h}>0, a.s.$ and $h>S_Q$, by Theorem \ref{critical}, 
$$\E\left[\ln\frac{h(1-\beta)}{\int y\I_{0, S_Q}(dy)}\right]>0.$$
Then for $h''$ close enough to $h$, we also have 
$$\E\left[\ln\frac{h''(1-\beta)}{\int y\I_{0, S_Q}(dy)}\right]>0.$$
The above display entails that there is condensation at $h''$, thanks to Theorem \ref{critical}.
Together with Corollary \ref{pc}--3), we have 
\begin{equation}\label{p12}\E\left[\ln\frac{h''(1-\beta)}{\int y\I_{0, h''}(dy)}\right]=0, \quad I_{0,h''}>0, \quad a.s.\end{equation}
Since $\I_{0,h''}$ is an invariant measure, the limit $\nu$ is still an invariant measure. Using \eqref{p12} and Corollary \ref{kk'}, the pointwise convergence of $\I_{0,h''}$ to $\nu$ as $h''\to h$ entails
\begin{equation}\label{nu=0}\E\left[\ln\frac{h(1-\beta)}{\int y\nu(dy)}\right]=0,\quad \nu(h)>0 \,\,\, a.s..\end{equation}
Using Corollary \ref{kk'} again, in the pointwise sense 
$$\I_{0,h}\leq_{h''-}\I_{0,h''},\quad \I_{0,h''}(h'')\leq \I_{0,h}(h), $$
implying in the pointwise sense (since $\nu$ is a pointwise weak limit of $\I_{0,h''}$),
\begin{equation}\label{inu}\I_{0,h}\leq_{h-}\nu, \quad \nu(h)\leq  I_{0,h}.\end{equation}
On the other hand, by assumption $I_{0,h}>0, a.s.$, so using Corollary \ref{pc}--3), 
$$\E\left[\ln\frac{h(1-\beta)}{\int y\I_{0,h}(dy)}\right]=0.$$
The above display with (\ref{nu=0}) and (\ref{inu}) entails that 
\begin{equation}\label{nu=h}\nu=\I_{0,h}, \quad \text{pointwise}.\end{equation}
Therefore we proved that $\I_{0,h''}$ converges pointwise to the weak limit $\I_{0,h}$ as $h''\to h.$

Now taking into account (\ref{13}), for any continuous function $f,$
\begin{align}\label{=-=}\int f(x)P_n''(dx)&\stackrel{d}{\longrightarrow}\int f(x)\I_{0,h''}(dx)&\nonumber\\
&\xrightarrow[h''\to h ]{\text{pointwise}}\int f(x)\I_{0,h}(dx)\stackrel{d}{\longleftarrow}\int f(x)P_n'(dx).&\end{align}
Note that using \eqref{ppp'''}, for any bounded continuous increasing function $g$ we have 
$$\int g(x)P_n''(dx)\preceq \int g(x)P_n(dx)\preceq\int g(x)P_n'(dx).$$
Together with \eqref{=-=}, we obtain
$$\int g(x)P_n(dx)\stackrel{d}{\longrightarrow}\int g(x)\I_{0,h}(dx).$$
Since by \eqref{ppp'''}, $P_n'\leq_{h-}P_n$ pointwise for any $n$, the above display entails that $P_n$ converges weakly to $\I_{0,h}$, the same weak limit of $(P_n')$.

If $h=S_Q$, we follow the same procedure, except that to prove (\ref{nu=h}), we require Corollary \ref{uq}. \qed
\end{proof}

\subsection{\bf Proof of Theorem \ref{com}}\label{proofcom}
Firstly we prove two lemmas. Recall the definition of $S_{u}$ for $u\in M_1$. 
\begin{lemma}\label{ucontinu}
$S_{(\cdot)}$ is a continuous (hence measurable) function on $M_1$ with the topology of the weak convergence.  
\end{lemma}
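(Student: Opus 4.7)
The plan is to prove continuity of $S_{(\cdot)}\colon M_1 \to [0,1]$ by decomposing it into lower and upper semi-continuity with respect to weak convergence. Fix any weakly convergent sequence $u_n \to u$ in $M_1$, and handle the two one-sided inequalities separately.

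For lower semi-continuity, the approach is: for each $x < S_u$, the open set $(x, 1]$ satisfies $u((x, 1]) > 0$ by the definition of $S_u$, so the Portmanteau theorem (lower bound on open sets) yields $\liminf_n u_n((x, 1]) \geq u((x, 1]) > 0$. In particular $u_n((x, 1]) > 0$ for all $n$ sufficiently large, and hence $S_{u_n} \geq x$ eventually. Letting $x \uparrow S_u$ along a countable sequence gives $\liminf_n S_{u_n} \geq S_u$. This step is entirely routine.

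For upper semi-continuity, I would take any $y \in (S_u, 1)$, observe that $u([y, 1]) = 0$ so that $\{y\}$ and $\{1\}$ are both $u$-null and $[y,1]$ is a $u$-continuity set, and apply Portmanteau to conclude $u_n([y, 1]) \to u([y, 1]) = 0$. The remaining task is to upgrade this mass convergence to the pointwise support statement $S_{u_n} \leq y$ for all large $n$, which would give $\limsup_n S_{u_n} \leq y$ for every $y > S_u$ and therefore $\limsup_n S_{u_n} \leq S_u$.

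The main obstacle is precisely this last upgrade: the bound $u_n([y, 1]) \to 0$ does not, by itself, force $u_n([y, 1]) = 0$ for large $n$, so some additional argument is required to pass from vanishing total mass on $[y,1]$ to vanishing support above $y$. My plan to close the gap is a diagonal extraction along a sequence $y_k \downarrow S_u$, combined with any extra regularity of the $u_n$'s that is available in the setting where the lemma is invoked (for instance, known structure of their supports coming from the backward recursion). Once upper semi-continuity is secured, combining it with lower semi-continuity gives continuity of $S_{(\cdot)}$, and measurability is then automatic.
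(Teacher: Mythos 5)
Your lower semi-continuity step is correct and is, in substance, the whole of the paper's own argument: the paper extracts a subsequence with $S_{u_{n_k}}\to a<S_u$ and tests against a continuous function supported on $\bigl(\frac{a+S_u}{2},S_u\bigr]$, which is exactly your Portmanteau bound on an open set just below $S_u$.

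The gap you flag in the upper semi-continuity direction is genuine, and no diagonal extraction or appeal to extra structure can close it at the level of generality of the lemma, because $S_{(\cdot)}$ is simply not upper semi-continuous for the weak topology: take $u_n=(1-\tfrac1n)\delta_0+\tfrac1n\delta_1$, so that $u_n\to\delta_0$ weakly while $S_{u_n}=1$ for every $n$ and $S_{\delta_0}=0$. Thus the passage from $u_n([y,1])\to 0$ to $S_{u_n}\le y$ eventually is not merely unproved but false in general, and the lemma's claim of continuity is itself too strong; the paper's proof conceals this by treating the case $a>S_u$ as ``without loss of generality'' symmetric to $a<S_u$, which it is not. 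What survives — and what your first step already establishes — is that $S_{(\cdot)}$ is lower semi-continuous on $M_1$ (metrizable for the weak topology), hence Borel measurable, and measurability is the only property actually used later, namely to view $S_\nu$ as a random variable and form the regular conditional distribution of $\nu$ given $S_\nu$ in the proof of Theorem \ref{com}. So the correct repair is not to hunt for additional regularity of the $u_n$'s coming from the backward recursion, but to weaken the statement to lower semi-continuity (hence measurability), which your argument proves and which suffices for every application in the paper.
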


\begin{proof}
Assume that a sequence $(u_n)$ converges weakly to $u$. If $S_{u_n}$ does not converge to $S_{u}$, then  there exists a subsequence $(u_{n_k})$ such that $S_{u_{n_k}}$ converges to a limit $a$ with $a<S_{u}$ or $a>S_{u}$. 
Without loss of generality, assume $a<S_u.$ We take a positive and continuous function $f$ supported on $(\frac{a+S_{u}}{2}, S_{u}]$ and then $\int f(x)u(dx)>0.$ But $\int f(x)u_{n_k}(dx)$ converges to $0$. This is against the weak convergence. The proof is then completed. \qed
\end{proof}

The next lemma generalises Corollary \ref{uq}. 
\begin{lemma}\label{h=i}
For any invariant measure $\nu$ with $S_{\nu}=h$ a.s., we have $\nu\stackrel{d}{=}\I.$
\end{lemma}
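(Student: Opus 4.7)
The plan is to apply Theorem \ref{main} to a forward sequence initialized from $\nu$, so that invariance freezes the law along the sequence while Theorem \ref{main} pins down its limit. Concretely, I would enlarge the probability space so that $P_0 \stackrel{d}{=} \nu$ is independent of the driving sequence $(\beta_n)_{n\geq 1}$, and define $(P_n)_{n\geq 0}$ by the recursion \eqref{ipi}. An induction on $n$ then shows $P_n \stackrel{d}{=} \nu$ for every $n$: the base case is by construction, and for the inductive step $\beta_{n+1}$ is independent of $(P_0,\beta_1,\ldots,\beta_n)$ and hence of $P_n$, so the invariance relation \eqref{invnu} applied to the pair $(\beta_{n+1}, P_n)$ yields $P_{n+1}\stackrel{d}{=}\nu$.

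Next I would invoke Theorem \ref{main}. Since $S_{P_0}=S_\nu = h$ almost surely, the theorem should force $(P_n)$ to converge weakly to $\I = \I_{0,h}$. Theorem \ref{main} is proved in the excerpt only for deterministic initial data, so a small extension is needed to cover a random $P_0$ independent of $(\beta_n)$. Conditional on $P_0=p_0$ with $S_{p_0}=h$, the sequence falls into one of Cases 1--4 (depending on the $0/1$ value of $I_{0,h}$ and on whether $p_0(h)>0$), and in each case $\int f\,dP_n \stackrel{d}{\longrightarrow} \int f\,d\I$ for every continuous $f$, where $\I = \G_0(\beta_1,\beta_2,\ldots)$ depends only on $(\beta_n)$ through Lemma \ref{limitgf} and Corollary \ref{ier}. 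Bounded convergence upgrades this to the unconditional convergence in distribution of $\int f\,dP_n$ to $\int f\,d\I$.

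Putting the two steps together, $\int f\,d\nu \stackrel{d}{=} \int f\,dP_n \stackrel{d}{\longrightarrow} \int f\,d\I$ for every continuous $f$, which forces $\int f\,d\nu \stackrel{d}{=} \int f\,d\I$, and hence $\nu \stackrel{d}{=} \I$ in the weak-convergence sense used throughout the paper. The only genuinely delicate point will be the unconditioning step, where one must check that the limiting random measure is the same function of $(\beta_n)$ regardless of $p_0$; this is built into the construction of $\I$ and should not pose a substantive obstacle, so I expect the lemma to follow as a fairly direct consequence of Theorem \ref{main} combined with the stationarity enforced by \eqref{invnu}.
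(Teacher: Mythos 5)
Your proposal is correct and takes essentially the same route as the paper: run the recursion from a random $P_0\stackrel{d}{=}\nu$ independent of $(\beta_n)$, use invariance to freeze the law at $\nu$ along the sequence, and condition on $P_0$ to invoke Theorem \ref{main}. The paper's proof is simply a terser version of yours; the induction giving $P_n\stackrel{d}{=}\nu$ and the bounded-convergence unconditioning step that you spell out are left implicit there.
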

\begin{proof}
Let $(P_n)$ be the forward sequence in the random model with $P_0\stackrel{d}{=}\nu$
and $P_0$ independent of $(\beta_n)$.  By Theorem \ref{main}, conditionally on $P_0$, $P_n$ converges in distribution to the same random measure $\I.$ Then unconditionally $P_n\stackrel{d}{=}\nu$ converges in distribution to $\I,$ implying $\nu\stackrel{d}{=}\I$.
\end{proof}

\begin{proof}[Proof of Theorem \ref{com}]
Let $\nu$ be any invariant measure. By Definition (\ref{invnu}), $S_\nu \in [S_Q, 1]$, a.s..\ By Lemma \ref{ucontinu}, $S_\nu$ is a random variable and then by Theorem 5.3 in \cite{K97}, there exists a regular conditional distribution of $\nu$ on $S_\nu$.  

Conditioning on $S_\nu$ for both sides of (\ref{invnu}), we see that $(\nu| S_\nu)$ must be an invariant measure almost surely. By Lemma \ref{h=i}, conditionally on $S_\nu$, we have $(\nu|S_\nu)\stackrel{d}{=}\I$  almost surely,  where $\I$ is the random probability measure with parameters $\mathcal L, Q, h=S_\nu$ and satisfies $\P(S_{\I}=S_\nu|S_\nu)=1$, a.s.. Then the proof is finished. \qed


\end{proof}




\appendix
\section{Analysis of $\ln\frac{h(1-b)}{\int x\K_Q(dx)}$ in Kingman's model}\label{=<0}
We discuss respectively Theorem \ref{King}-1 (i.e., $\int \frac{Q(dx)}{1-x/h}\geq b^{-1}$)  and Theorem \ref{King}-2 (i.e., $\int \frac{Q(dx)}{1-x/h}< b^{-1}$). For the former, let us compute $\int x\K_Q(dx)$ first. 
\begin{align}\label{xk}
\int x\K_Q(dx)&=\int\frac{b\theta_bxQ(dx)}{\theta_b-(1-b)x}&\nonumber\\
&=\int\frac{b\theta_b(x-\theta_b/(1-b))Q(dx)+b\theta_b^2/(1-b)Q(dx)}{\theta_b-(1-b)x}&\nonumber\\
&=\frac{b\theta_b}{1-b}+\frac{b\theta_b^2}{1-b}\int\frac{Q(dx)}{\theta_b-(1-b)x}&\nonumber\\
&=\theta_b&
\end{align}
where the last equality is due to the fact that $\theta_b$ is the solution of the equation \eqref{sb}. The equation \eqref{sb} also  implies 
$$\int\frac{\theta_bQ(dx)}{\theta_b-(1-b)x}=\int\frac{Q(dx)}{1-(1-b)x/\theta_b}=b^{-1}.$$ 
Recall $\int \frac{Q(dx)}{1-x/h}\geq b^{-1}.$ Then the above display entails that 
$$\frac{1}{h}\geq \frac{1-b}{\theta_b}.$$
Taking into account \eqref{xk}, we arrive at 
$$\frac{h(1-b)}{\int x\K_Q(dx)}\leq 1, \text{ or equivalently, } \ln \frac{h(1-b)}{\int x\K_Q(dx)}\leq 0.$$
The equality holds if and only if $\int \frac{Q(dx)}{1-x/h}=b^{-1}.$

For Theorem \ref{King}-2, we have 
 \begin{align*}
\int x\K_Q(dx)&=\int\frac{bxQ(dx)}{1-x/S_Q}+\left(1-\int\frac{bQ(dx)}{1-x/S_Q}\right)S_Q&\\
&=S_Q+\int\frac{b(x-S_Q)Q(dx)}{1-x/S_Q}&\\
&=(1-b)S_Q.&
\end{align*}
Then we obtain
$$\ln \frac{h(1-b)}{\int x\K_Q(dx)}=\ln\frac{h}{S_Q}\geq 0$$
where the equality holds if and only if $h=S_Q$. 

In conclusion, if $h>S_Q,$ $\ln \frac{h(1-b)}{\int x\K_Q(dx)}\leq 0$ is equivalent to $\int \frac{Q(dx)}{1-x/h}\geq b^{-1}$ (non-condensation case), and $\ln \frac{(1-b)S_Q}{\int x\K_Q(dx)}>0$ is equivalent to $\int \frac{Q(dx)}{1-x/h}<b^{-1}$ (condensation case).

If $h=S_Q$, $\ln \frac{h(1-b)}{\int x\K_Q(dx)}= 0$ is equivalent to either $\int \frac{Q(dx)}{1-x/S_Q}=b^{-1}$ (non-condensation case) or $\int \frac{Q(dx)}{1-x/S_Q}<b^{-1}$ (condensation case), and $\ln \frac{S_Q(1-b)}{\int x\K_Q(dx)}<0$ is equivalent to $\int \frac{Q(dx)}{1-x/S_Q}>b^{-1}$ (non-condensation case). The case $\ln \frac{S_Q(1-b)}{\int x\K_Q(dx)}>0$ does not exist, which is in line with Remark \ref{e<=0}.  

Therefore, if $h=S_Q$, knowing only $\ln \frac{h(1-b)}{\int x\K_Q(dx)}= 0$ cannot tell whether the condensation occurs or not.




\section*{Acknowledgements} The author thanks Takis Konstantopoulos, G\"otz Kersting and Pascal Grange for discussions. The author thanks the anonymous reviewers for their comments which greatly improved the presentation of the paper, and for suggesting the intuition for Theorem \ref{critical}.  The author acknowledges the support of the National Natural Science Foundation of China (Youth Programme, Grant:
11801458).


%
%
%
%

\end{document}